\documentclass[11pt,a4paper]{article}
\usepackage{color}
\usepackage{graphicx}
\usepackage{amsfonts}
\usepackage{amsmath,amsthm,amssymb,color}
\usepackage{hyperref}
\usepackage{eepic}
\usepackage{lineno}
\usepackage{enumerate}	
\usepackage{paralist}
\setlength{\textwidth}{172mm} \setlength{\oddsidemargin}{-5mm}
\setlength{\evensidemargin}{7mm} \setlength{\topmargin}{-20mm}
\setlength{\textheight}{245mm}

\usepackage{tikz}

\theoremstyle{theorem}
\newtheorem{theorem}{Theorem}[section]

\newtheorem{lemma}{Lemma}[section]

\theoremstyle{definition}

\newtheorem{claim}{Claim}[section]

\newtheorem{conjecture}{Conjecture}[section]
\newtheorem{question}{Question}

\baselineskip 15pt

\begin{document}

\title{\bf Large rainbow matchings in edge-colored graphs with given average color degree
\thanks{Supported by NSFC (Nos. 11871311, 11631014).}}
\date{}
\author{Wenling Zhou \thanks{School of Mathematics,
Shandong University, Jinan 250100, P.R. China. Email:
\texttt{gracezhou@mail.sdu.edu.cn}.}
}
\maketitle

\begin{center}
\begin{minipage}{130mm}
\small\noindent{\bf Abstract:}
A rainbow matching in an edge-colored graph is a matching in which no two edges have the same color. The color degree of a vertex $v$ is the number of different colors on edges incident to $v$.
Kritschgau [\textit{Electron. J. Combin. 27(2020)}] studied the existence of rainbow matchings in edge-colored graph $G$ with average color degree at least $2k$,
and proved some sufficient conditions for a rainbow marching of size $k$ in $G$. The sufficient conditions include that $|V(G)|\ge 12k^2+4k$, or $G$ is a properly edge-colored graph with $|V(G)|\ge 8k$.

In this paper, we show that every edge-colored graph $G$ with $|V(G)|\ge 4k-4$ and average color degree at least $2k-1$ contains a rainbow matching of size $k$.
In addition, we also prove that every strongly edge-colored graph $G$ with average degree at least $2k-1$ contains a rainbow matching of size at least $k$. The bound is sharp for complete graphs.

\smallskip
\noindent{\bf Keywords:} rainbow matching, edge-colored graph, strongly edge-colored graph, average color degree
\end{minipage}
\end{center}

\smallskip

\section{Introduction}
We use~\cite{MR3822066} for terminology and notation not defined here and only consider simple undirected graphs. An \textit{edge-colored} graph is a graph in which each edge is assigned a color. Given an edge-colored graph $G$, we call it a \textit{properly edge-colored} graph if its any two adjacent edges have different colors.
Thus, in a properly edge-colored graph, edges of the same color form
a matching. If for each color $\alpha$, the set of edges having color $\alpha$ forms an induced matching in $G$, then we say that $G$ is \textit{strongly edge-colored}. Therefore, a strongly edge-colored graph is always properly edge-colored.
Furthermore, a matching $M$ in an edge-colored graph $G$ is a \textit{rainbow matching} if the edges in $M$ have distinct colors.

Given an edge-colored graph $G=(V,E)$, we use $\delta (G)$ and $d(G)$ to denote the minimum degree and the average degree of $G$ respectively. For a vertex $v\in V$, the \textit{color degree}, $\hat d_G(v)$ of $v$ is the number of different colors on edges incident to $v$.
When it is clear from the context what $G$ is, we would omit the subscript.
We use $\hat{\delta}(G)$, $\hat{\Delta}(G)$ and $\hat d(G)$ to denote
the \textit{minimum color degree}, the \textit{maximum color degree} and the \textit{average color degree} of $G$ respectively, i.e., $\hat{\delta}(G)=\min\{\hat d(v): v\in V\}$, $\hat{\Delta}(G)=\max\{\hat d(v): v\in V\}$ and $\hat d(G)=\sum_{v\in V}{\hat d(v)}/n$.
Clearly, for an edge-colored graph $G$, we have $d(v)\geq \hat d(v)$ for each $v\in V$.

Rainbow matchings in edge-colored graphs were originally studied in connection to the famous conjecture of Ryser~\cite{Ryser}, which equivalently states that every properly edge-colored complete bipartite graph $K_{n, n}$ with $n$ colors contains a rainbow matching of size $n$, where $n$ is odd. Unlike uncolored matchings for which the maximum matching problem is solvable in polynomial time, the maximum rainbow matching problem is \textit{NP}-Complete, even for bipartite graphs,
mentioned in Garey and Johnson \cite{1979Computers} as the multiple choice matching problem. Therefore, the existence of rainbow matchings has also been studied in its own right.

During the last decades, many researchers have studied the sufficient conditions to ensure that a properly edge-colored graph $G$ has a rainbow matching of size $\delta(G)$. In~\cite{MR2831098}, Wang asked does there exist a function $f(\delta(G))$, such that every properly edge-colored graph $G$ with $|V(G)|\geq f( \delta(G))$ contains a rainbow matching of size $\delta(G)$.
Diemunsch et al.~\cite{MR2946110} proved that such function does exist and $f( \delta(G))\leq \frac{98}{23}\delta(G)$. Gy\'arf\'as and S\'ark\"{o}zy~\cite{MR3192419} improved the result to $f( \delta(G))\leq 4\delta(G)-3$.
Later, this problem was generalized to find the function $f(\hat \delta(G))$ for any edge-colored graph $G$.
Lo and Tan~\cite{MR3167016} showed that $f(\hat \delta(G))\leq 4\hat \delta(G)-4$ is sufficient for $\hat \delta(G)\geq 4$.
As far as we know, the best result so far is $f(\hat \delta(G))\leq\frac{7}{2}\hat \delta(G)+2$ in~\cite{MR3357798}.
In addition, the lower bound for the size $r(G)$ of the maximum rainbow matchings in edge-colored graph $G$ has also been studied independently, in terms of the minimum color degree of $G$.
In~\cite{MR2465762}, Li and Wang showed that $r(G)\ge \lceil \frac{5\hat \delta(G)-13}{12}\rceil$ for every edge-colored graph $G$, and they conjectured that $r(G)\ge \lceil \hat \delta(G)/2\rceil$ for $\hat \delta(G)\geq 4$.
Consider a properly edge-colored $K_4$, whose edges of the same color form a matching of size $2$. For convenience, it is denoted as $\widetilde{K_4}$.
It is easy to verify that $\widetilde{K_4}$ has no a rainbow matching of size $2$, which motivates the restriction $\hat \delta(G)\geq 4$. In particular, the bound of this conjecture is sharp for properly edge-colored complete graphs. This conjecture was partially confirmed in~\cite{2010Rainbow} and fully confirmed in~\cite{MR2900062}. In particular, Kostochka and Yancey~\cite{MR2900062} proved that if $G$ is not $\widetilde{K_4}$, then $r(G)\ge \lceil \delta(G)/2\rceil$.

Since the maximum rainbow matchings problem in edge-colored graphs in terms of the minimum color degree is well studied, it is natural to study this problem under the average color degree condition.
Michael Ferrara raised~\cite{kritschgau2020rainbow} the following related question during the Rocky Mountain and Great Plains Graduate Research Workshop in Combinatorics in 2017.

\begin{question}\label{Q1}
If $G$ is an edge-colored graph with $\hat d(G)\geq 2k$, does $G$ contain a rainbow matching of size $k$?
\end{question}

Since the average color degree condition is weaker than the minimum color degree, it is more difficult to study the maximum rainbow matchings problem under the average color degree condition. Therefore, there are few known results under the average color degree condition.
Recently, Kritschgau~\cite{kritschgau2020rainbow} studied Question~\ref{Q1}, and proved some sufficient conditions to bound from below $r(G)$ in $G$ with a prescribed average color degree.
We denote by $C_i$ the cycle with $i$ vertices.

\begin{theorem}[Kritschgau~\cite{kritschgau2020rainbow}]\label{K}
Each condition below guarantees that $r(G)\ge k$ for each edge-colored graph $G$ with $\hat d(G)\geq 2k$.
\begin{compactenum}
\item[\rm (i)] $G$ is $C_3$-free.
\item[\rm (ii)] $G$ is $C_4$-free.
\item[\rm (iii)] $G$ is properly edge-colored and $|V(G)|\geq 8k$.
\item[\rm (iv)] $|V(G)|\geq 12k^{2}+4k$.
\end{compactenum}
\end{theorem}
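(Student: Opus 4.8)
The plan is the classical maximum-rainbow-matching argument. Fix a maximum rainbow matching $M=\{a_1b_1,\dots,a_mb_m\}$, so $m=r(G)$; write $C(M)$ for its set of $m$ colors, set $U=V(G)\setminus V(M)$ with $|U|=n-2m$, call a color \emph{new} if it does not lie in $C(M)$, and assume for contradiction that $m\le k-1$. For a vertex $v$, let $\hat d^{\mathrm{new}}(v)$ be the number of new colors incident to $v$; since at most $m$ colors of $C(M)$ are incident to $v$, we have $\hat d(v)\le m+\hat d^{\mathrm{new}}(v)\le(k-1)+\hat d^{\mathrm{new}}(v)$, and summing over $v$ against the hypothesis $\sum_v\hat d(v)\ge 2kn$ gives $\sum_v\hat d^{\mathrm{new}}(v)\ge(k+1)n$. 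So the whole theorem reduces to showing that a maximum rainbow matching of size $<k$ forces $\sum_v\hat d^{\mathrm{new}}(v)<(k+1)n$; equivalently, the new colors cannot be "too spread out."

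The engine for this is \emph{rotation}. By maximality, no new-colored edge has both ends in $U$, so every new color is incident to $V(M)$. Moreover, if $u\in U$ and $ua_i$ is new-colored, then $M'=M-a_ib_i+ua_i$ is again a maximum rainbow matching with uncovered set $U-u+b_i$, and the "no new edge inside the uncovered set" principle, applied to $M'$, forces colors of $C(M')$ onto almost all edges from $b_i$ into $U$. Iterating such rotations over all matching edges and all new-colored edges from $U$ to $V(M)$, I would aim to prove the structural dichotomy that drives everything: for each edge $a_ib_i$, at most one of its endpoints is joined to $U$ by two or more new colors, and, more delicately, only a bounded number (in terms of $k$) of the vertices of $V(M)$ can be joined to $U$ by "many" new colors at once — because two such heavy vertices would let one rotate both out of $M$ and then augment along a new-colored edge from one of the freed-up $b_i$'s, which exists provided $U$ is large. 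Establishing this heavy-vertex bound is exactly the step I expect to be hardest: a priori a single vertex of $V(M)$ can be joined to \emph{all} of $U$ by pairwise distinct new colors, and the cost of ruling out many such vertices simultaneously is an additive error term of order $k^2$, which is what ultimately forces $|V(G)|$ to be quadratic in $k$.

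Granting the heavy-vertex structure, one bounds $\sum_{w\in V(M)}\hat d^{\mathrm{new}}(w)$ by $|U|$ per heavy vertex plus $O(1)$ per light vertex, hence by $(\text{const})\cdot|U|+O(k^2)$, and controls $\sum_{u\in U}\hat d^{\mathrm{new}}(u)$ similarly (each $u$ contributes its at most $2m$ new colors, but the same rotation analysis shows all but $O(k^2)$ vertices of $U$ contribute $O(1)$). Adding these and comparing with $(k+1)n$ gives the contradiction once $|U|=n-2m$ is large relative to $k^2$, which is precisely the content of $|V(G)|\ge 12k^2+4k$; this proves (iv). For (iii), properness makes every color class a matching, so every new "fan" around a vertex of $V(M)$ is itself a matching and all the per-vertex constants collapse, reducing the required size to $|V(G)|\ge 8k$ in the same framework. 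Finally, (i) and (ii) carry hidden size bounds: a $C_3$-free graph with $\hat d(G)\ge 2k$ satisfies $2kn\le\sum_v\hat d(v)\le 2e(G)\le n^2/2$, so $n\ge 4k$, and $C_3$-freeness also forces each $u\in U$ to see at most one endpoint of each $a_ib_i$, killing the fans outright; a $C_4$-free graph with $\hat d(G)\ge 2k$ has, by the K\H{o}v\'ari--S\'os--Tur\'an bound, $n=\Omega(k^2)$, and $C_4$-freeness bounds how two vertices of $U$ can attach to $V(M)$, so the same argument closes with no extra hypothesis. In all four cases the proof ends with the one comparison $\sum_v\hat d(v)<2kn$, contradicting the hypothesis.
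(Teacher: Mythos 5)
Your opening reduction is fine: with a maximum rainbow matching $M$ of size $m\le k-1$, each vertex sees at most $m$ colors of $C(M)$, so $\sum_v\hat d^{\mathrm{new}}(v)\ge (k+1)n$, and maximality forbids new-colored edges inside $U$. But from that point on the proposal is an outline rather than a proof: the entire content of the theorem is concentrated in the ``heavy-vertex'' bound, which you explicitly grant rather than prove, and the one structural statement you do assert is false as stated. Both endpoints of a matching edge $a_ib_i$ \emph{can} be joined to $U$ by two new colors each: take two vertices $u,u'\in U$ and color the four edges between $\{a_i,b_i\}$ and $\{u,u'\}$ so that equal colors form perfect matchings (the $\widetilde{K_4}$ pattern that this paper's Claim 5, Case 1, is built around); no augmentation is possible. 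So the dichotomy ``at most one endpoint has $\ge 2$ new colors'' cannot be the engine. Moreover, the closing count does not close: a single vertex of $V(M)$ can indeed carry $|U|$ distinct new colors, and nothing you establish caps the number of such vertices below $\Theta(k)$, so your bound $(\mathrm{const})\cdot|U|$ per heavy vertex summed over $V(M)$ is $\Theta(k n)$, which does not contradict $(k+1)n$. What is actually needed is a per-matching-edge bound on $\hat d(a_i)+\hat d(b_i)+(\text{new colors into }U)$ obtained from a careful case analysis of how free/new colors can attach to $\{a_i,b_i\}$ without creating an augmentation --- exactly the content of Claims 3--6 in Section 2 of this paper --- and none of that analysis is present.

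For the record, the paper does not reprove Kritschgau's theorem by this direct route at all: it deduces it from Theorem 1.3 ($\hat d(G)\ge 2k-1$ and $|V(G)|\ge 4k-4$ suffice). Parts (iii) and (iv) follow immediately since $8k$ and $12k^2+4k$ exceed $4k-4$; for (i) and (ii) one observes that any surviving counterexample has $n<4k-4$ and hence more than $n^2/4+3n/4$ edges, exceeding the Tur\'an numbers of $C_3$ and of $C_4$, so it cannot be $C_3$-free or $C_4$-free. Your sketch of (i) and (ii) instead tries to rerun the rotation argument with extra structure, which again rests on the unproven main lemma. As it stands the proposal identifies the right starting point but leaves the theorem's actual difficulty unaddressed.
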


Though Kritschgau~\cite{kritschgau2020rainbow} did not resolve Question~\ref{Q1} for all graphs, he believe the answer is affirmative.
Recall that Kostochka and Yancey showed that $r(G)\ge k$ for all edge-colored graph $G$ with $\hat \delta(G)\ge 2k-1$ and $G\neq \widetilde{K_4}$.
We want to study the consistency of the maximum rainbow matching between the minimum color degree condition and the average color degree condition. Generalising Question~\ref{Q1}, we ask the following question.

\begin{question}\label{Q2}
If $G$ is not $\widetilde{K_4}$ and $\hat d(G)\geq 2k-1$, does $G$ contain a rainbow matching of size $k$?
\end{question}

If the answer of Question~\ref{Q2} is affirmative, it would be best possible,
because the properly edge-colored complete graph $K_{t+1}$ satisfies $\hat d(K_{t+1})\geq t$ for each $t\in \mathbb{N}$, but $r(K_{t+1})\le \lceil t/2\rceil$.
In this paper, we partially resolve Question~\ref{Q2} and obtain the following result.

\begin{theorem}\label{result1}
For all $k\in \mathbb{N}^*$, let $G$ is an edge-colored graph with $\hat d(G) \geq 2k-1$ and $G\neq \widetilde{K_4}$. If $|V(G)|\geq 4k-4$, then $r(G)\ge k$.
\end{theorem}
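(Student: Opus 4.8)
The plan is to argue by contradiction: suppose $\hat d(G)\ge 2k-1$, $G\ne\widetilde{K_4}$ and $|V(G)|\ge 4k-4$, yet $r(G)=t\le k-1$. Fix a maximum rainbow matching $M=\{x_1y_1,\dots,x_ty_t\}$ with colours $c_1,\dots,c_t$, and put $W=V(M)$, $U=V(G)\setminus W$; then $|U|=|V(G)|-2t\ge 4k-4-2(k-1)=2k-2\ge 2t$ (the case $k=1$ being trivial, so assume $k\ge2$). Call a colour \emph{fresh} if it is not among $c_1,\dots,c_t$. Since $\sum_v\hat d(v)\ge(2k-1)|V(G)|\ge(2t+1)|V(G)|$, it suffices to reach the contradiction $\sum_v\hat d(v)<(2t+1)|V(G)|$.

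First I would record the structural consequences of the maximality of $M$, obtained by exchange arguments. (a)~No fresh-coloured edge has both ends in $U$. (b)~For each $i$ and any two distinct $u,w\in U$, the edges among $\{ux_i,uy_i,wx_i,wy_i\}$ use at most two distinct fresh colours: otherwise some pair $\{uz,wz'\}$ with $\{z,z'\}=\{x_i,y_i\}$ consists of two disjoint fresh-coloured edges of different colours, and $M-x_iy_i+uz+wz'$ is a larger rainbow matching; iterating this over $U$ shows that, apart from at most one or two exceptional vertices of $U$, every vertex of $U$ sees at most one fresh colour at each matching edge. (c)~Analogous but more delicate exchanges --- deleting a matching edge and re-covering one or both of its ends by fresh-coloured edges to $U$, and deleting two matching edges $x_iy_i,x_jy_j$ and re-covering via $x_ix_j$ together with edges from $y_i,y_j$ into $U$ --- bound the fresh colours incident to $W$, both inside $W$ and between $W$ and $U$. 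I expect (c) to be the heart of the argument and the source of most of the case analysis.

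Next I would bound $\hat d(v)\le t+\phi(v)$ for every $v$, where $\phi(v)$ is the number of fresh colours at $v$, so that $\sum_v\hat d(v)\le t|V(G)|+\sum_{\gamma\ \mathrm{fresh}}|V(G_\gamma)|$, with $G_\gamma$ the colour-$\gamma$ subgraph. By (a) each $G_\gamma$ is covered by $W$, so $|V(G_\gamma)|\le 2t+|V(G_\gamma)\cap U|$; by (b) only a bounded number of fresh colours can reach many vertices of $U$, and they do so as stars based in $W$, whence $\sum_{\gamma\ \mathrm{fresh}}|V(G_\gamma)\cap U|\le t|U|+O(t)$; by (b) and (c) the fresh colours touching $W$ contribute at most $O(t^2)$ to $\sum_{v\in W}\phi(v)$, and it is precisely the hypothesis $|V(G)|\ge 4k-4\ge 4t$ that makes this last term negligible next to $|V(G)|$. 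Combining the estimates gives $\sum_v\hat d(v)\le 2t|V(G)|-2t^2+O(|V(G)|_{\text{l.o.t.}})<(2t+1)|V(G)|$, the desired contradiction; it is the careful tracking of the lower-order terms here that pins down both the threshold $4k-4$ and the single exceptional graph $\widetilde{K_4}$.

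Finally the very small cases are dealt with by hand: $k=1$ is immediate (an edge exists), and for $k=2$ with $|V(G)|=4$ the condition $\hat d(G)\ge3$ forces $G$ to be a properly edge-coloured $K_4$, which contains a rainbow matching of size $2$ unless it is $\widetilde{K_4}$; for $k\ge3$ one has $|V(G)|\ge8$, so $G\ne\widetilde{K_4}$ automatically. The main obstacle, as flagged, is step (c): unlike the vertices of $U$, the matched vertices $W$ carry no a priori bound on their colour degree, so obtaining strong enough control over the fresh colours they see --- without the cushion of a large $|V(G)|$ as in Theorem~\ref{K}(iv) --- is where the real work lies; it may be convenient there to invoke the Kostochka--Yancey bound $r\ge\lceil\hat\delta/2\rceil$ on a suitable auxiliary subgraph rather than argue everything by hand.
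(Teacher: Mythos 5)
Your overall strategy --- fix a maximum rainbow matching $M$, split colors into fresh (free) and matching colors, bound $\hat d(v)\le t+\phi(v)$, and control the fresh colors seen from $U$ and from $W=V(M)$ by exchange arguments --- is indeed the skeleton of the paper's proof. But the proposal has a genuine gap exactly where you flag it: step (c), the control of fresh colors at the matched vertices $W$, is not carried out, and it is not a routine case analysis that can be left to the reader. Your own accounting shows why: you need roughly $\sum_{v\in W}\phi(v)\le 2t^2+O(t)$, i.e.\ each matched vertex sees on average only about $t$ fresh colors, and a generic ``$O(t^2)$ from exchanges'' bound would only recover a quadratic threshold $|V(G)|\gtrsim Ct^2$ --- that is Kritschgau's Theorem~\ref{K}(iv), not the linear bound $4k-4$. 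The vague suggestion to ``invoke Kostochka--Yancey on a suitable auxiliary subgraph'' does not close this.

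The missing ideas are the ones the paper builds its argument on. First, it works with a minimal counterexample and induction on $k$, which yields the crucial Claim~\ref{maxdegree}: $\hat\Delta(G)\le 2(k-1)+a$, where $a=|\varphi(H)|$ is the number of colors surviving inside $H=G[V\setminus V(M)]$ and $M$ is chosen among all maximum rainbow matchings to \emph{maximize} $a$. This is precisely the a priori bound on the color degree of matched vertices that your setup lacks (and it is proved by deleting a high-degree vertex and applying the induction hypothesis --- not by an exchange on $M$). Second, combining this with Kostochka--Yancey forces $a\ge 2$, and the whole counting argument is then organized around the partition of $M$ according to how each matching color lives inside $H$ ($X_1,X_2,X_3$ and $Y_1,Y_2,Y_3$), with the quantity $h(i)$ measuring fresh edges between $\{u_i,v_i\}$ and $H$. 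Your observations (a) and (b) correspond to the forbidden configurations (1.1)--(1.3) of the paper and are correct, but without the maximum-color-degree claim and the $a$-maximizing choice of $M$, the estimates in your step (c) cannot be pushed down to what the linear threshold requires. As written, the proposal is a plausible plan whose decisive step is acknowledged but not executed.
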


\noindent {\bf Remark}. Theorem~\ref{result1} implies that, for any $k$, only finitely many edge-colored
graphs with average color degree at least $2k-1$ can fail to have a rainbow matching of size $k$. Furthermore, it is easy to verify that these graphs $G$ that may fail satisfy $|E(G)|\ge |V(G)|^2/4+3|V(G)|/4$. By the Tu\'ran number of $C_3$ and $C_4$, these graphs all contain $C_3$ and $C_4$. Therefore, Theorem~\ref{result1} can deduce Theorem~\ref{K}.

In addition, the topic of rainbow matchings in strongly edge-colored graphs in terms of the minimum color degree has been also well studied. Note that for strongly edge-colored graph $G$, we have $d(v)=\hat d(v)$ for each $v\in V(G)$. In 2015 Babu-Chandran-Vaidyanathan~\cite{Babu2015} showed that $r(G)\ge \lfloor 3\delta(G)/4\rfloor$ for any strongly edge-colored graph $G$ with $|V(G)|\ge 2\lfloor 3\delta(G)/4\rfloor$.
They also proposed an interesting question: Is there a constant $c$ greater than $3/4$ such that every strongly edge-colored graph $G$ has $r(G)\ge \lfloor c\delta(G)\rfloor$ if $|V(G)|\ge  2\lfloor c \delta(G)\rfloor$? Clearly, $c\le 1$. The best result so far on this question is from Cheng-Tan-Wang~\cite{Cheng2018}, and they proved the following result.

\begin{theorem}[Cheng-Tan-Wang~\cite{Cheng2018}]\label{strongly}
If $G$ is a strongly edge-colored graph with $|V(G)|\ge 2\delta(G)+1$, then
$r(G)\ge \delta(G)$.
\end{theorem}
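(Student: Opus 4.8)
The plan is to argue by contradiction with the standard maximum-rainbow-matching method, leaning on the two features special to a strongly edge-colored graph: $\hat d(v)=d(v)$ for every vertex $v$, and every colour class is an \emph{induced} matching. Suppose $r(G)<\delta(G)$ and let $M=\{x_iy_i:1\le i\le r\}$ be a maximum rainbow matching, with $x_iy_i$ of colour $c_i$ and $r\le\delta(G)-1$; among all maximum rainbow matchings take $M$ extremal for a potential fixed at the end. Put $U=V(G)\setminus V(M)$, $t=\delta(G)-r\ge 1$ and $C(M)=\{c_1,\dots,c_r\}$. From $|V(G)|\ge 2\delta(G)+1$ we get $|U|=|V(G)|-2r\ge 2t+1\ge 3$. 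As $G$ is properly edge-colored, $\hat d(u)=d(u)\ge\delta(G)=r+t$ for each $u\in U$, so $u$ meets at least $t$ colours outside $C(M)$; the unique edge of each such colour at $u$ must land in $V(M)$, since otherwise $M$ would extend. Call these the \emph{fresh edges} at $u$: at least $t$ of them, reaching $V(M)$, pairwise differently coloured, all colours outside $C(M)$.

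The crucial point is a consequence of strong edge-coloring: for a fixed $i$ one cannot have a fresh edge $ux_i$ and a fresh edge $u'y_i$ with $u,u'\in U$ and $u\ne u'$. Indeed, if their colours $\gamma,\gamma'$ coincide, then $ux_i$ and $u'y_i$ are two edges of colour $\gamma$ joined by the matching edge $x_iy_i$, contradicting that the $\gamma$-class is an induced matching; and if $\gamma\ne\gamma'$, then $\gamma,\gamma'$ are distinct colours outside $C(M)$, so $M-x_iy_i+ux_i+u'y_i$ is a rainbow matching of size $r+1$, again a contradiction. Writing $S_i$ (resp.\ $T_i$) for the set of $u\in U$ with a fresh edge to $x_i$ (resp.\ $y_i$), this says that if $S_i$ and $T_i$ are both nonempty then they coincide and equal a single vertex; in particular $\min(|S_i|,|T_i|)\le 1$ always, and if $e_i$ receives three or more fresh edges from $U$ then they all hit one endpoint and the other endpoint receives none.

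From here I would study how the $\sum_i(|S_i|+|T_i|)\ge |U|\,t$ fresh edges spread over the $r$ matching edges under this one-sidedness, with $|U|\ge 2t+1$ available, and bring in rotations: given a matching edge $e_i$ and a fresh edge $ux_i$ at it, replace $e_i$ by $ux_i$ to get a new maximum rainbow matching $M'$ in which $y_i$ is unsaturated and plays a $U$-role, the old matching edge $x_iy_i$ becomes a fresh edge at $y_i$ (its colour $c_i$ has left the palette), and the crucial observation, applied to the new matching edge $ux_i$, controls which fresh edges $u$ can now receive. Iterating such rotations --- equivalently, picking $M$ at the outset to minimize the right potential so that one rotation already improves $M$ --- should force an augmentation of $M$, the sought contradiction.

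The hard part is exactly this last step. Counting fresh edges is too coarse to close the argument once $r$ is close to $\delta(G)$ (the extreme case $t=1$, $|U|=3$ is really the heart of the matter), so the contradiction has to come from the rotation machinery; but a rotation changes $C(M)$ and may promote $C(M)$-coloured edges elsewhere into fresh edges, so the genuine work is to exhibit a monovariant that strictly improves under it --- equivalently, to show that an extremal $M$ cannot carry a ``heavy one-sided'' matching edge at all. One should also dispose separately of the boundary case $|U|=3$.
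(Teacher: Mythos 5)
This statement is quoted from Cheng--Tan--Wang and is not proved in the paper, so there is no in-paper argument to compare yours against; I can only judge the proposal on its own terms. On those terms it is not a proof: it is an honest setup that stops exactly where the theorem starts to be hard, and you say so yourself. What you do establish is correct and is the natural first move. The ``fresh edge'' count (each $u\in U$ sends at least $t=\delta(G)-r$ fresh edges into $V(M)$, to distinct vertices since $G$ is simple) is right, and the one-sidedness lemma is right: a colour class being an \emph{induced} matching kills the case $\gamma=\gamma'$, and maximality of $M$ kills $\gamma\ne\gamma'$, so for each $i$ at most one endpoint of $x_iy_i$ can receive fresh edges from more than one vertex of $U$. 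But, as you note, this only yields $\sum_i(|S_i|+|T_i|)\le r\max(|U|,2)$ against a lower bound of $|U|\,t$, which is vacuous whenever $t\le r$ --- i.e.\ in every nontrivial case. The entire content of the theorem therefore rests on the rotation step, and that step is only gestured at: you never define the monovariant, never show a rotation strictly decreases it, and never handle the fact (which you correctly flag) that a rotation swaps $c_i$ out of the palette and thereby creates new fresh edges all over the graph, potentially destroying the one-sidedness structure you relied on. ``Should force an augmentation'' is the theorem, not a proof of it.

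Concretely, the gap is visible already in the extremal case you identify, $t=1$, $|U|=3$, $r=\delta(G)-1$: each of the three vertices of $U$ has one fresh edge into $V(M)$, one-sidedness is automatically satisfiable, and nothing in your write-up rules this configuration out. To finish one must either (a) exhibit the potential function explicitly and verify it strictly improves under a rotation even after the palette changes, or (b) find a different use of the induced-matching property --- e.g.\ exploiting that a fresh colour $\gamma$ seen at $u\in U$ has its entire $\gamma$-class inducedly scattered, so a second $\gamma$-edge cannot sit anywhere adjacent to the first --- to produce two disjoint fresh edges of distinct colours avoiding some $x_iy_i$ directly. Neither is done, so the proposal should be regarded as a plausible opening rather than a proof of the cited theorem.
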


Rather than considering host graphs with a prescribed minimum
color degree, we consider host graphs with a prescribed average color degree, and obtain the following a sharp result.

\begin{theorem}\label{result2}
For any $k\in \mathbb{N}^*$, if $G$ is a strongly edge-colored graph with $ d(G) \geq 2k-1$, then $r(G)\ge k$.
\end{theorem}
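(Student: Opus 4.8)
The plan is to reduce, via the two results already available (Theorems~\ref{result1} and~\ref{strongly}), to a single dense case and then analyse a maximum rainbow matching directly. We may assume $k\ge 2$, since for $k=1$ the hypothesis $d(G)\ge 1$ already forces $G$ to contain an edge. As every color class of a strongly edge-colored graph is an \emph{induced} matching, $G$ is not $\widetilde{K_4}$ (in $\widetilde{K_4}$ the two edges of a color class are joined by an edge), and $n:=|V(G)|\ge 2k$ because $n-1\ge\Delta(G)\ge d(G)\ge 2k-1$. If $n\ge 4k-4$, Theorem~\ref{result1} applies (with $\hat d(G)=d(G)\ge 2k-1$, $G\neq\widetilde{K_4}$) and gives $r(G)\ge k$; so assume $2k\le n\le 4k-5$. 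Deleting a vertex cannot increase $r$, and removing a vertex of degree at most $k-1$ (note $k-1<\tfrac12(2k-1)\le\tfrac12 d(G)$) does not decrease the average degree; hence by repeatedly deleting minimum-degree vertices we may assume $\delta(G)\ge k$, with $2k\le n\le 4k-5$ still holding. If now $n\ge 2\delta(G)+1$, Theorem~\ref{strongly} gives $r(G)\ge\delta(G)\ge k$, so we may assume $\delta(G)\ge n/2$. Finally, if $n=2k$ then $d(G)\ge 2k-1$ forces $G=K_{2k}$, whose color classes are single edges, so $G$ is rainbow and $r(G)=k$. Thus it remains to treat $2k+1\le n\le 4k-5$ with $\delta(G)\ge n/2$ and $d(G)\ge 2k-1$.

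Suppose for contradiction that a maximum rainbow matching $M=\{e_1,\dots,e_m\}$, with $e_i=x_iy_i$ of color $c_i$, satisfies $m\le k-1$. Put $\mathcal C_M=\{c_1,\dots,c_m\}$, $U=V(G)\setminus V(M)$, and $\ell=|U|=n-2m\ge 2$. The crucial tool is a \emph{swap lemma}: for distinct $u,v\in U$ and any $i$, the edges $ux_i$ and $vy_i$ cannot both have colors outside $\mathcal C_M$. Indeed, if they did, then since $e_i$ joins $x_i$ to $y_i$, the induced-matching property forces $c(ux_i)\neq c(vy_i)$, and then $M-e_i+ux_i+vy_i$ would be a rainbow matching larger than $M$. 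Consequently, letting $a_i$ (resp. $b_i$) be the number of $u\in U$ with $ux_i$ (resp. $uy_i$) colored outside $\mathcal C_M$, we obtain $a_i+b_i\le\ell$, with equality (when $\ell\ge 3$) only if $a_i=\ell$ or $b_i=\ell$. Moreover, every edge inside $U$ has a color in $\mathcal C_M$ (otherwise $M$ extends), and every edge at a vertex of $U$ with a color outside $\mathcal C_M$ goes to $V(M)$; so for $v\in U$ we have $d(v)=\hat d(v)\le m+\nu(v)$, where $\nu(v)$ is the number of colors outside $\mathcal C_M$ at $v$, and $\sum_{v\in U}\nu(v)=\sum_{i=1}^m(a_i+b_i)\le m\ell$.

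Since $\delta(G)\ge n/2$, each $v\in U$ has $\nu(v)\ge n/2-m$, and summing over $U$ gives $(n-2m)(n/2-m)\le m(n-2m)$, hence $m\ge n/4$; together with $m\le k-1$ this yields only $n\le 4k-4$, so a finer analysis is needed. The plan for that is to examine the indices $i$ for which (after possibly interchanging $x_i$ and $y_i$) $a_i$ is close to $\ell$: when $a_i=\ell$, every vertex of $U$ is joined to $x_i$ by an edge colored outside $\mathcal C_M$, and applying the swap lemma to each matching $M-e_i+ux_i$ shows that $c_i$ does not occur inside $U$ and that $y_i$ sends no edge colored outside $\mathcal C_M$ into $U$. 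Furthermore, if a set $I^{+}$ of indices all behave this way, then the edges from $U$ to $\{x_i:i\in I^{+}\}$ form a complete bipartite graph all of whose colors are pairwise distinct (again by the induced-matching property), a rainbow $K_{\ell,|I^{+}|}$; together with the edges $e_i$ and the edges inside $V(M)$ this severely restricts how much degree remains available. Playing these constraints off against $\delta(G)\ge n/2$, $d(G)\ge 2k-1$ and $2k+1\le n\le 4k-5$ should force $d(G)<2k-1$, the desired contradiction.

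The main obstacle is precisely this last step. The straightforward degree count supplied by $a_i+b_i\le\ell$ and $d(v)\le m+\nu(v)$ falls just short of contradicting $d(G)\ge 2k-1$ throughout the window $2k+1\le n\le 4k-5$ — by itself it cleanly settles only $n\le 2k$, improving to $n\le 2k+O(1)$ with the refinement above — so the required extra savings must come from the delicate book-keeping of the ``saturated'' matching edges and of the way the strong-coloring (induced-matching) constraint propagates under repeated swaps. By contrast, the reductions of the first paragraph and the swap lemma itself are routine.
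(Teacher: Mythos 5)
Your proposal is not a proof: after the (correct) reductions to the window $2k+1\le n\le 4k-5$ with $\delta(G)\ge n/2$, and after the (correct) swap lemma, you concede that the degree count only yields $m\ge n/4$, i.e.\ $n\le 4k-4$, which is vacuous on exactly the range you still have to handle, and you leave the required ``extra savings'' as an unexecuted plan. That last step is the entire content of the theorem in the dense regime, so the argument has a genuine hole at its crucial point. A secondary (repairable) wrinkle: your appeal to Theorem~\ref{strongly} needs $n\ge 2\delta(G)+1$, so it only eliminates the case $\delta(G)\le (n-1)/2$; you correctly note that what survives is $\delta(G)\ge n/2$, but that is precisely the case you cannot close.

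The paper goes in the opposite direction and avoids the dense case entirely. Take a minimal counterexample $G$ with $d(G)=2k-1$; by Kostochka--Yancey and Theorem~\ref{strongly} one may assume $n\ge 2k+1$ and $\delta(G)\le k-1$. Instead of deleting low-degree vertices to raise $\delta$, it \emph{uses} a vertex $v$ with $d(v)\le k-1$: pick $u\in N(v)$ with $\varphi(uv)=\alpha$, and let $G^*$ be $G$ minus $u$, $v$ and \emph{all} edges of the color class $G^\alpha$. Any rainbow $(k-1)$-matching in $G^*$ would extend by $uv$ (no $\alpha$-edge survives in $G^*$), so $r(G^*)\le k-2$, and the induction hypothesis in contrapositive form gives $d(G^*)<2k-3$. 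The two quantitative inputs you are missing are exactly these: (i) the induction hypothesis bounds the total degree of the \emph{rest} of the graph, not just of the deleted pieces; and (ii) the strong-coloring hypothesis bounds $d(u)\le n-2|E(G^\alpha)|+1$ (the neighbourhood of $u$ must avoid both endpoints of every other $\alpha$-edge), which precisely compensates for the $2(|E(G^\alpha)|-1)$ degree units lost by removing the color class. Summing $2d(u)+2d(v)+2(|E(G^\alpha)|-1)+(n-2)d(G^*)$ then comes out strictly below $(2k-1)n$, a contradiction, in a few lines. If you want to salvage your approach, you would need a genuinely new idea for strongly edge-colored graphs with $\delta\ge n/2$ and $2k+1\le n\le 4k-5$; the rainbow $K_{\ell,|I^+|}$ structure you identify is a plausible starting point, but as written it is a heuristic, not an argument.
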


Next, we will prove Theorem~\ref{result1} and Theorem~\ref{result2} in Section 2 and Section 3 respectively. Finally, we close the paper with some remarks and conjectures in Section 4.

\section{Proof of Theorem \ref{result1}}
In this section, we will prove Theorem~\ref{result1} by induction on $k$. The base case $k = 1$ is trivial.
Suppose $k\geq 2$, and let $G$ with edge coloring $\varphi$ be a counterexample to Theorem~\ref{result1} with the fewest edges. Let $2k-1:=\hat d(G)$ and $n:=|V(G)|$ with $n\ge 4k-4$.

For the sake of contradiction, we will study the total color degree of $G$ in the following proofs. Let us start with some useful notation.
For simplicity, set $V:=V(G)$. For $v\in V$, the \emph{neighborhood} of $v$ is denoted by let $N(v):=\{u\in V \mid uv\in E(G)\}$. For $U\subseteq V$, Let $G[U]$ denote the induced subgraph of $G$ on vertex set $U$. The color used on $G[U]$ will be denoted $\varphi(G[U])$, i.e., $\varphi(G[U])=\{\varphi(e): e\in E(G[U])\}$.
If $U=V$, then we write that $\varphi(G)$ simply.

\subsection{Preliminary results}
By induction hypothesis, $r(G)=k-1$. Choose a rainbow matching $M$ of size $k-1$ in $G$, which maximizes $|\varphi(G[V\setminus V(M)])|$.
Let $H$ be the subgraph induced by $G[V\setminus V(M)]$, and let $a=|\varphi(H)|$. Clearly, $0\leq a\leq k-1$.
We say that a color appearing in $G$ is \emph{free} if it does not appear on an edge of $M$, otherwise it is \emph{unfree}. Therefore, we can divide $\varphi(G)$ into two disjoint subset $\varphi_f$ and $\varphi_{uf}$, where $\varphi_{uf}=\{\varphi(e): e\in E(M)\}$ and $\varphi_f=\varphi(G)\setminus \varphi_{uf}$.
For every vertex $v\in V$, let $\hat{d}^{f}(v)$ and $\hat{d}^{uf}(v)$ denote the \emph{free color degree} and the \emph{unfree color degree} of $v$ in $G$ respectively. Clearly, we have $\hat{d}(v)=\hat{d}^{f}(v)+\hat{d}^{uf}(v)$.
Without loss of generality, let $E(M)=\{u_iv_i: 1\leq i\leq k-1\}$. For $1\le i \le k-1$, let $B_i$ denote the bipartite subgraph of $G$ whose edge connect $\{u_i,v_i\}$ with $V(H)$, and set $h(i):=\sum_{w\in V(H)} \hat{d}^{f}_{B_i} (w)$.
This notation $h(i)$ will be used for the rest of the paper.
First, we find a property of $G$.

\begin{claim}\label{maxdegree}
$\hat \Delta(G)\leq 2(k-1)+a$.
\end{claim}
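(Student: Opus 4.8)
The plan is to argue by contradiction, exploiting the extremal choice of $M$. Suppose some vertex $v$ satisfies $\hat d(v)\ge 2(k-1)+a+1$. Since the unfree colors are exactly the $k-1$ colors of $M$, we have $\hat d^{uf}(v)\le k-1$, hence $\hat d^{f}(v)\ge k+a$. Two observations coming from the maximality of $|M|$ will be used repeatedly. First, no edge of $H$ carries a free color: otherwise that edge together with $M$ would be a rainbow matching of size $k$. Thus $\varphi(H)\subseteq\varphi_{uf}$, and in particular every free color appearing at $v$ lies outside $\varphi(H)$, so all $k+a$ free colors at $v$ are ``new''. Second, if $v\in V(H)$ then every free-colored edge at $v$ must have its other endpoint in $V(M)$ (again by the first observation).

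I would then split into the cases $v\in V(H)$ and $v\in V(M)$; both reduce to the same rotation lemma. Take $v\in V(H)$. Then $v$ sends at least $k+a$ edges with distinct free colors to at least $k+a$ distinct vertices of $V(M)$, and since $M$ has only $k-1$ edges, at least $a+1$ of the edges $u_iv_i\in M$ are \emph{doubly reached}: $v$ is joined to both $u_i$ and $v_i$ by free-colored edges, say of colors $\alpha_i=\varphi(vu_i)$ and $\beta_i=\varphi(vv_i)$. For each such $i$, the key move is to produce a rainbow matching $M^{\sharp}$ of size $k-1$ that avoids both $v$ and $v_i$ and omits the color $\beta_i$; then $M^{\sharp}\cup\{vv_i\}$ is a rainbow matching of size $k$, a contradiction. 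One clean case where this succeeds: if $\varphi(u_iv_i)$ also appears on some edge $e$ of $H-v$, take $M^{\sharp}=(M\setminus\{u_iv_i\})\cup\{e\}$, which is rainbow of size $k-1$, misses $v$ and $v_i$, and has palette $\varphi(M)\not\ni\beta_i$. Hence we may assume $\varphi(u_iv_i)\notin\varphi(H-v)$ for every doubly-reached $i$. Since $\varphi(H-v)\subseteq\varphi(H)\subseteq\varphi(M)$, the number of colors of $M$ lying outside $\varphi(H-v)$ is $k-1-|\varphi(H-v)|$, so this forces $a+1\le k-1-|\varphi(H-v)|$; thus $\varphi(H-v)$ is small and $v$ must touch almost all colors of $H$. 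I would then feed this structural information into the rotations $M_i:=(M\setminus\{u_iv_i\})\cup\{vu_i\}$, whose uncovered sets are $(V(H)\setminus v)\cup\{v_i\}$ and hence, by the maximality of $|\varphi(H)|$, have palette of size at most $a$; comparing with $|\varphi(H)|=a$ should show that either $M$ augments to size $k$ or some $M_i$ yields a rainbow matching of size $k-1$ with a strictly larger uncovered palette, both impossible.

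For $v\in V(M)$, say $v=u_1$, the argument is the same after one extra dichotomy. If one of the at least $k+a$ free-colored edges at $u_1$ lands at a vertex $w\in V(H)$, rotate $u_1v_1$ out and $u_1w$ in, bringing $v_1$ into the uncovered set, and argue as above. If instead all such edges land in $V(M)\setminus\{u_1\}$, a surplus of doubly-reached $M$-edges again appears and the same rotation lemma applies. So the content is identical in the two cases.

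The step I expect to be the main obstacle is exactly this rotation bookkeeping: showing that a surplus of $a+1$ doubly-reached $M$-edges with ``new'' free colors cannot coexist with the extremal choice of $M$. The delicate point is that rotating $v$ into the matching deletes $v$ from the uncovered part, temporarily losing the colors carried only by $v$'s edges inside $H$; the reason one needs the surplus ``$a+1$'' rather than just one doubly-reached edge is precisely to absorb this loss. Everything else --- the split $\hat d=\hat d^{f}+\hat d^{uf}$, the bound $\hat d^{uf}(v)\le k-1$, and the two maximality observations --- is routine.
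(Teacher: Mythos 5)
There is a genuine gap, and it sits exactly where you flag it: the ``rotation bookkeeping'' is not a routine loose end but the entire content of the claim under your approach, and the quantities you extract do not obviously clash. From the $a+1$ doubly-reached edges you deduce $|\varphi(H-v)|\le k-a-2$, but this is only in tension with $|\varphi(H)|=a$ when $k<2a+2$; moreover, at the point where this claim is proved the paper has not yet established $a\ge 2$ (that bound is \emph{deduced from} this claim), so $a$ could be $0$ or $1$ and the ``surplus'' $a+1$ could be a single edge. The final step --- that some rotation $M_i=(M\setminus\{u_iv_i\})\cup\{vu_i\}$ either augments or strictly enlarges the uncovered palette --- is asserted rather than proved, and it is not clear it is true: the uncovered set of $M_i$ is $(V(H)\setminus\{v\})\cup\{v_i\}$, and nothing prevents $v_i$ from contributing enough colors to $H-v$ to keep that palette at exactly $a$ while still admitting no augmentation through $v$ alone. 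So the argument as written does not close.

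The paper avoids all of this with one move you did not consider: it applies the \emph{induction hypothesis on $k$} to $G^*=G-v$. A short count shows $\hat d(G^*)\ge \frac{(2k-1)n-2(n-1)}{n-1}>2(k-1)-1$, so $G^*$ contains a rainbow matching $M^*$ of size $k-1$ that automatically avoids $v$; no rotation is needed to move $v$ out of the matching. The extremal choice of $M$ (maximizing $|\varphi(G[V\setminus V(M)])|$) then gives $|\varphi(G[V\setminus V(M^*)])|\le a$, and since $v$ lies in the uncovered part of $M^*$, its edges split into at most $2(k-1)$ going to $V(M^*)$ and edges into the uncovered part carrying at most $a$ colors; with $\hat d(v)\ge 2(k-1)+a+1$ one finds an edge $uv$ with $u\notin V(M^*)$ and $\varphi(uv)\notin\varphi(M^*)$, and $M^*\cup\{uv\}$ is a rainbow matching of size $k$. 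Your two maximality observations are correct and reappear in the paper's argument, but the missing ingredient is precisely the use of the minimality/induction structure of the counterexample to manufacture a size-$(k-1)$ rainbow matching avoiding $v$, rather than trying to rotate $M$ by hand.
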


\begin{proof}
Assume for the sake of contradiction that there is a vertex $v\in V$ with $\hat d(v)\geq 2k+a-1$. Let $G^{*}=G-v$, which is obtained from $G$ by deleting the vertex $v$ and all edges incident with $v$. Since $\hat d(v)\le n-1$ and $\hat{d}(G)=2k-1$, we have
\[
\hat{d}(G^*)\geq \frac{(2k-1)n-2(n-1)}{n-1}> 2(k-1)-1.
\]
By induction hypothesis, $G^{*}$ contains a rainbow matching $M^*$ of size $k-1$. Let $H^*$ be the induced subgraph of $G$ on vertex set $V\setminus V(M^*)$, we have $|\varphi(H^*)|\leq |\varphi(H)|= a$. Since $\hat d(v) \geq 2k+a-1$, there is at least one vertex $u\in N(v)$ such that $u \notin V(M^*)$ and $\varphi(uv) \notin \varphi(M^*)$. Let $M'=M^* \cup \{uv\}$, which yields a rainbow matching of size $k$ in $G$, a contradiction.
\end{proof}

Furthermore, recalling the result of Kostochka and Yancey, it follows that $\hat \delta(G)<2k-1$, otherwise $G$ contains a rainbow matching of size $k$, a contradiction. Therefore, we have $2k-1=\hat d(G)< \hat \Delta(G)\leq 2(k-1)+a$, i.e., $a\geq 2$. In addition, since $a\le k-1$, we have $k\ge3$ in the next proof.

By the minimality of $G$, it is easy to prove the following property.

\begin{lemma}[\cite{MR2900062}]\label{star}
The edges of each color class of $\varphi$ form a forest of stars.
\end{lemma}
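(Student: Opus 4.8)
Planning a proof of Lemma 2.2 (the statement that each color class of $\varphi$ forms a forest of stars).

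The plan is to exploit the minimality of the counterexample $G$: if some color class of $\varphi$ failed to be a forest of stars, I would delete an edge from that class without decreasing the average color degree or the vertex count, contradicting the choice of $G$ as a counterexample with the fewest edges. So first I would fix a color $\alpha$ and let $F_\alpha = \{e \in E(G) : \varphi(e) = \alpha\}$ be its color class. I want to show $F_\alpha$ is a forest of stars, i.e., it contains no path on three edges and no cycle; equivalently, every connected component of the subgraph with edge set $F_\alpha$ is a star $K_{1,t}$ (possibly a single edge or vertex).

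Here is the key step. Suppose $F_\alpha$ is \emph{not} a forest of stars. Then $F_\alpha$ contains an edge $e = xy$ such that both $x$ and $y$ are incident to another edge of color $\alpha$; that is, there exist $x' \ne y$ with $\varphi(xx') = \alpha$ and $y' \ne x$ with $\varphi(yy') = \alpha$. (Such an $e$ exists: either there is a cycle in $F_\alpha$, in which case every edge of the cycle works, or there is a path $v_0v_1v_2v_3$ on three edges, in which case $e = v_1v_2$ works.) Now form $G' = G - e$. Deleting $e$ removes no vertex, so $|V(G')| = n \ge 4k-4$, and $G' \ne \widetilde{K_4}$ since $\widetilde{K_4}$ has $6$ edges and removing one of its edges from $G$ would make $G$ have at most $7$ edges while $\hat d(G) = 2k-1 \ge 5$ forces many edges. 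The crucial point is that the color degree of $G'$ is unchanged: for every vertex $v \in V$, we have $\hat d_{G'}(v) = \hat d_G(v)$. Indeed, for $v \notin \{x,y\}$ this is immediate since no edge at $v$ was removed; for $v = x$, the color $\alpha$ still appears at $x$ via the edge $xx'$, so the number of distinct colors at $x$ is preserved; and symmetrically for $v = y$ via $yy'$. Hence $\hat d(G') = \hat d(G) = 2k-1$ and $|V(G')| \ge 4k-4$, but $|E(G')| < |E(G)|$, so by the minimality of $G$ the graph $G'$ is \emph{not} a counterexample, meaning $r(G') \ge k$. Any rainbow matching of size $k$ in $G'$ is also one in $G$, contradicting $r(G) = k-1$.

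I expect the only slightly delicate point to be confirming $G' \ne \widetilde{K_4}$, but this is routine: the hypotheses already forced $k \ge 3$ (as noted just above in the excerpt), so $\hat d(G) = 2k-1 \ge 5$ and $n \ge 4k-4 \ge 8$, whereas $\widetilde{K_4}$ has only $4$ vertices; so $G$ — and a fortiori $G'$, which has the same $8$-plus vertices — cannot equal $\widetilde{K_4}$. Everything else is a direct application of the "fewest edges" minimality together with the observation that deleting a non-pendant-free edge of a color class does not drop any vertex's color degree. The result is attributed to Kostochka–Yancey because the same argument appears in their paper; I would simply remark that the proof is identical after checking that our hypotheses ($n \ge 4k-4$, $G \ne \widetilde{K_4}$) are preserved under the edge deletion.
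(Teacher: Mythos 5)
Your proof is correct and is exactly the argument the paper has in mind: the paper gives no details, merely citing Kostochka--Yancey and noting that the claim follows ``by the minimality of $G$,'' and your edge-deletion argument (removing an edge of a color class both of whose endpoints see that color again, which preserves every color degree and hence $\hat d(G)=2k-1$ while keeping $n\ge 4k-4$ and $G'\neq\widetilde{K_4}$) is precisely that minimality argument. Your identification of such an edge in any non-star-forest color class and your check that a rainbow matching of $G-e$ is one of $G$ are both sound, so nothing is missing.
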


Now, let us consider the relationship between the rainbow matching $M$ and the induced subgraph $H$.
Given a color $\alpha \in \varphi(H)$, let $H^\alpha$ denote the subgraph of $H$ with the edges in color class $\alpha$, and $s_H(\alpha)$ denote the number of stars in $H^\alpha$. Since $\varphi(H)\subseteq \varphi_{uf}$, we partition $M$ into $X_1,X_2,X_3$ as following:
\begin{compactenum}
  \item [\rm (1)]For every  $e\in X_{1}$, $s_H(\varphi(e))\geq 2$;
  \item [\rm (2)]For every $e\in X_{2}$, $s_H(\varphi(e))=1$;
  \item [\rm (3)]For every $e\in X_{3}$, $s_H(\varphi(e))=0$.
\end{compactenum}

A \emph{free} edge in $G$ is an edge colored with a free color. For $v\in V(M)$, let $E_H^{f}(v)$ denote the set of free edges connecting $v$ and $V(H)$ in $G$.
In order to get a more detailed estimate, we partition $X_3$ into $Y_1,Y_2,Y_3$ as following:
\begin{compactenum}
\item[\rm (i)] For every $e\in Y_{1}$, every endpoint $v$ of edge $e$ with $|\varphi(E_H^{f}(v))|\geq 1$;
\item[\rm (ii)] For every $e\in Y_{2}$, there is only one endpoint $v$ of edge $e$ with $|\varphi(E_H^{f}(v))|\geq 1$;
\item[\rm (iii)] For every $e\in Y_{3}$, every endpoint $v$ of edge $e$ with $|\varphi(E_H^{f}(v))|=0$.
\end{compactenum}

For convenience, let $x_{j}=|X_{j}|$ and $y_{j}=|Y_{j}|$ for $1\le j\le 3$, and let $\varphi(u_iv_i)=i$ for every edge $u_iv_i\in E(M)$. Next, for the partition above, we will state and prove several claims that are useful for the proof of Theorem~\ref{result1}.
\begin{claim}\label{x1}
For every edge $u_iv_i\in X_{1}$, we have $h(i)=0$.
\end{claim}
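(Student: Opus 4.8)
We need to show $h(i) = 0$ for every edge $u_iv_i \in X_1$. Recall $h(i) = \sum_{w \in V(H)} \hat{d}^f_{B_i}(w)$, so $h(i) = 0$ means no free color appears on any edge between $\{u_i, v_i\}$ and $V(H)$. The hypothesis is that color $i = \varphi(u_iv_i)$ has $s_H(i) \ge 2$, i.e., the edges of color $i$ inside $H$ form a forest with at least two stars (at least two distinct star components).

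Let me think about why this should be true. Suppose for contradiction that some free edge $e$ joins $u_i$ (say) to a vertex $w \in V(H)$, with $\varphi(e) = \beta$ a free color. The idea is to build a new rainbow matching of size $k-1$ that is "at least as good" for the maximization of $|\varphi(G[V\setminus V(M)])|$, but actually strictly better — contradicting the choice of $M$.

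Here's the plan. Since $s_H(i) \ge 2$, there are two disjoint stars $S_1, S_2$ in $H^i$. Pick an edge $f_1 \in S_1$ and an edge $f_2 \in S_2$, both colored $i$, with endpoints in $V(H)$. Now consider replacing $u_iv_i$ in $M$: remove $u_iv_i$, and we want to add back an edge to restore size $k-1$ while keeping things rainbow. If we add $f_1$ (colored $i$), we get a rainbow matching $M' = (M \setminus \{u_iv_i\}) \cup \{f_1\}$ of size $k-1$ — still rainbow since we removed the only other edge of color $i$. Now $V(M') = (V(M) \setminus \{u_i, v_i\}) \cup V(f_1)$, so $V \setminus V(M') = (V(H) \setminus V(f_1)) \cup \{u_i, v_i\}$. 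We need $|\varphi(G[V \setminus V(M')])| > a$ to contradict maximality. In $G[V \setminus V(M')]$ we still have the edge $f_2$ (colored $i$, and $f_2$'s endpoints survive since $S_1, S_2$ are disjoint), plus the free edge $u_i w$ colored $\beta$. The point: $\varphi(H) \setminus \{i\}$ had $a - 1$ colors, but $f_2$ contributes color $i$ and $u_iw$ contributes the free color $\beta \notin \varphi(H) \supseteq \varphi_{uf}\cap\varphi(H)$... actually I need $\beta \notin \varphi(G[V\setminus V(M')])$'s old part — since $\beta$ is free, $\beta \notin \varphi(M) \supseteq \varphi(M')$, and $\beta$ may or may not have been in $\varphi(H)$. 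The cleanest argument: we should also be careful that $w \neq $ endpoints of $f_1$. If $w$ is an endpoint of $f_1$, we instead use a different star or a different edge of $S_1$; with $s_H(i)\ge 2$ there is enough room to choose $f_1$ avoiding $w$.

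**Key steps, in order.** First, assume $h(i) > 0$, so there is a free edge $e = vw$ with $v \in \{u_i, v_i\}$, $w \in V(H)$, $\varphi(e) = \beta$ free. Second, using $s_H(i) \ge 2$, select an edge $f_1$ of color $i$ inside $H$ whose endpoints avoid $w$, and an edge $f_2$ of color $i$ inside $H$ in a different star component, so that $f_1, f_2$ are vertex-disjoint and $w \notin V(f_1)$. Third, form $M' = (M \setminus \{u_iv_i\}) \cup \{f_1\}$; check it is a rainbow matching of size $k-1$ (color $i$ appears exactly once; $f_1 \subseteq V(H)$ is disjoint from $V(M) \setminus \{u_i,v_i\}$). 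Fourth, compute $V \setminus V(M') = (V(H) \setminus V(f_1)) \cup \{u_i, v_i\}$ and exhibit inside $G[V\setminus V(M')]$ all colors of $\varphi(H \setminus V(f_1))$ — need that removing the two vertices of $f_1$ from $H$ only kills color $i$ among colors seen, using Lemma~\ref{star} (each color class is a forest of stars, so a color surviving in $H$ has an edge avoiding any fixed two vertices unless all its edges meet $V(f_1)$ — handle this more carefully by choosing $f_1$ to be a pendant-type edge or by a counting argument) — plus the retained edge $f_2$ giving color $i$, plus the edge $vw$ giving color $\beta \notin \varphi(M') $. Conclude $|\varphi(G[V\setminus V(M')])| \ge a + 1 > a$, contradicting the maximality of $M$.

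**Main obstacle.** The delicate point is bookkeeping of which colors survive when we pass from $H$ to $H - V(f_1)$ and then add back $\{u_i, v_i\}$: I must ensure the net change in $|\varphi(\cdot)|$ is strictly positive. Removing $V(f_1)$ could in principle destroy several colors of $\varphi(H)$ if they only appeared on edges meeting $V(f_1)$; the structural Lemma~\ref{star} (forest of stars) is what controls this, since a star has a center and leaves, and I can choose $f_1$ to be an edge incident to a leaf so that deleting its two endpoints removes at most that one edge from most color classes — but a center of a color-$i$ star might be a high-degree vertex for other colors too. The resolution is to choose $f_1$ carefully (e.g. take $f_1 = $ an edge of the color-$i$ star such that one endpoint is a leaf of that star) and to observe that, even if some colors are lost, each lost color $\gamma \ne i$ had all its $H$-edges incident to $V(f_1) \subseteq V(H)$, but color $\gamma$ then reappears on edges incident to $u_i$ or $v_i$ only if... — in fact the simplest fix, which I expect is what the paper does, is a direct contradiction with $k-1 = r(G)$ rather than with maximality: if a free edge $vw$ exists with $w\in V(H)$, then $M' = (M\setminus\{u_iv_i\})\cup\{f_1, vw'\}$... no — better: since $s_H(i)\ge 2$ we can pick $f_1, f_2$ both colored $i$ in distinct stars, disjoint, and disjoint from $w$; then $(M \setminus\{u_iv_i\}) \cup \{f_1\} \cup \{vw\}$ would be rainbow of size $k$ provided $vw$ is disjoint from $f_1$ and from $M\setminus\{u_iv_i\}$ — but $v \in \{u_i,v_i\}$ so $v\notin V(M\setminus\{u_iv_i\})$, $w\in V(H)$ with $w \notin V(f_1)$ by choice, and $\varphi(vw)=\beta$ free while $\varphi(f_1) = i$ which is no longer used — giving a rainbow matching of size $k$, the desired contradiction. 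This cleaner route avoids the color-counting entirely, and I expect that is the intended argument; the only care needed is the existence of $f_1$ avoiding $w$, which follows from $s_H(i) \ge 2$.
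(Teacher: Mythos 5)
Your final ``cleaner route'' is exactly the paper's argument (its configuration (1.1)): a free edge $vw$ with $v\in\{u_i,v_i\}$, $w\in V(H)$, together with a color-$i$ edge $f_1\subseteq V(H)$ avoiding $w$ (which exists because $s_H(i)\ge 2$ gives a whole color-$i$ star disjoint from $w$), yields the rainbow matching $(M\setminus\{u_iv_i\})\cup\{f_1,vw\}$ of size $k$, a contradiction. The earlier detour through the maximality of $|\varphi(G[V\setminus V(M)])|$ is unnecessary, but the argument you settle on is correct and is the intended one.
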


In Figure~1 we list three configurations (1.1), (1.2) and (1.3), which can not appear in $G$, otherwise, they would yield a rainbow matching of size $k$ in $G$. Configurations (1.1) directly proves Claim~\ref{x1}.
\begin{figure}
\begin{center}
\begin{tikzpicture}
\draw [densely dashed] (-3.1,-0.7) rectangle (-1.9,1);
\draw [densely dashed] (-1,0.1) ellipse (14pt and 25pt);

\filldraw (-2.9,0.5) circle (1pt);\node at(-2.9,0.65) {$u_i$};
\filldraw (-2.1,0.3) circle (1pt);\node at(-2.1,0.45) {$v_i$};
\draw (-2.9,0.5)--(-2.1,0.3);

\filldraw (-1.3,0) circle (1pt);\filldraw (-0.7,-0.3) circle (1pt);
\draw (-1.3,0)--(-0.7,-0.3);
\node at (-1,0) {$i$};

\filldraw (-1.2,0.3) circle (1pt);
\draw [red](-2.1,0.3)--(-1.2,0.3);
\node [red]at (-1.7,0.4) {$\alpha$};
\node at (-2.5,-1) {$M$};
\node at (-1,-1) {$H$};
\node at (-1.8,-1.2) {(1.1)};

\draw[shift ={(4 ,0)}][densely dashed] (-3.1,-0.7) rectangle (-1.9,1);
\draw[shift ={(4 ,0)}] [densely dashed] (-1,0.1) ellipse (14pt and 25pt);

\filldraw (1.1,0.5) circle (1pt);\node at(1.1,0.65) {$u_i$};
\filldraw (1.9,0.3) circle (1pt);\node at(1.9,0.45) {$v_i$};
\draw (1.1,0.5)--(1.9,0.3);
\filldraw (1.1,-0.2) circle (1pt);\node at(1.1,-0.4) {$u_j$};
\filldraw (1.9,-0.4) circle (1pt);\node at(1.9,-0.6) {$v_j$};
\draw (1.1,-0.2)--(1.9,-0.4);
\draw [red](1.1,0.5)--(1.1,-0.2);
\node [red]at (1.3,0.15) {$\alpha$};

\filldraw (2.7,0.4) circle (1pt);\filldraw (3.3,0.1) circle (1pt);
\draw (2.7,0.4)--(3.3,0.1);
\node at (3,0.4) {$i$};
\filldraw (2.7,-0.2) circle (1pt);\filldraw (3.3,-0.5) circle (1pt);
\draw (2.7,-0.2)--(3.3,-0.5);
\node at (3,-0.1) {$j$};

\node at (1.5,-1) {$M$};
\node at (3,-1) {$H$};
\node at (2.2,-1.2) {(1.2)};
\draw[shift ={(8 ,0)}][densely dashed] (-3.1,-0.8) rectangle (-1.9,1);
\draw[shift ={(8 ,0)}] [densely dashed] (-1,0.1) ellipse (14pt and 25pt);

\filldraw (5.1,0.5) circle (1pt);\node at(5.1,0.65) {$u_i$};
\filldraw (5.9,0.3) circle (1pt);\node at(5.9,0.45) {$v_i$};
\draw (5.1,0.5)--(5.9,0.3);
\filldraw (5.1,-0.2) circle (1pt);\node at(5.1,-0.4) {$u_j$};
\filldraw (5.9,-0.4) circle (1pt);\node at(5.9,-0.6) {$v_j$};
\draw (5.1,-0.2)--(5.9,-0.4);
\draw [red](5.1,0.5)--(5.1,-0.2);
\node [red]at (5.3,0.15) {$\alpha$};

\filldraw (6.7,0.4) circle (1pt);\filldraw (7.3,0.1) circle (1pt);
\filldraw (6.7,-0.3) circle (1pt);
\draw (6.7,0.4)--(7.3,0.1);\draw (6.7,-0.3)--(7.3,0.1);
\node at (7,0.4) {$i$};\node at (7,-0.4)  {$j$};

\draw [blue](5.9,0.3)--(6.7,0.4);
\node [blue]at (6.3,0.5) {$\beta$};

\node at (5.5,-1) {$M$};
\node at (7,-1) {$H$};
\node at (6.2,-1.2) {(1.3)};
\end{tikzpicture}
\caption{Some configurations that can not appear in $G$.}
\end{center}
\end{figure}
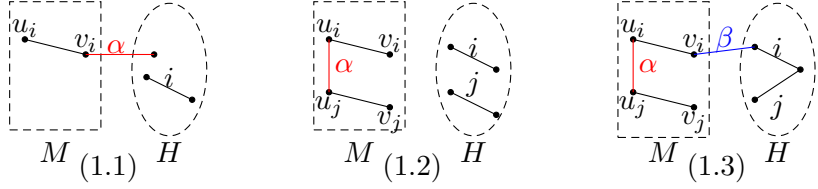

\begin{claim}\label{x2}
For every edge $u_iv_i\in X_{2}$, if $|E(H^i)|=1$, then we have $h(i)\le 4$; if $|E(H^i)|\ge 2$, then we have $h(i)\le 2$.
\end{claim}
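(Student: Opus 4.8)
\medskip
\noindent\emph{Proof idea.} The plan is to reach a contradiction from assuming $h(i)$ exceeds the stated bound, by exhibiting a rainbow matching of size $k$ in $G$ (contradicting the induction hypothesis $r(G)=k-1$). The mechanism is a single swap. Since $u_iv_i\in X_2$, the color $i=\varphi(u_iv_i)$ appears in $H$, so $E(H^i)\neq\emptyset$; replacing $u_iv_i$ by any edge $e\in E(H^i)$ yields again a rainbow matching of size $k-1$, and because $e\subseteq V(H)$ this swap frees both $u_i$ and $v_i$. If in addition we can choose a free edge $f$ joining one of $u_i,v_i$ to a vertex $w\in V(H)$ with $w\notin e$, then $(M\setminus\{u_iv_i\})\cup\{e,f\}$ is a matching of size $k$; its colors are $\{1,\dots,k-1\}$ together with $\varphi(f)$, and since $\varphi(f)$ is free it differs from every color of $M$, so this matching is rainbow --- a contradiction. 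Thus the claim reduces to one observation, which I would record first: \emph{if $w\in V(H)$ is joined to $\{u_i,v_i\}$ by a free edge, then every edge of $H^i$ contains $w$}.

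Given the observation, I would split on $|E(H^i)|$. If $|E(H^i)|=1$, write $E(H^i)=\{pq\}$; then every such $w$ lies in $\{p,q\}$, so only $p$ and $q$ can contribute to $h(i)=\sum_{w\in V(H)}\hat{d}^{f}_{B_i}(w)$, and since $\hat{d}^{f}_{B_i}(p)\le 2$ and $\hat{d}^{f}_{B_i}(q)\le 2$ we get $h(i)\le 4$. If $|E(H^i)|\ge 2$, then by Lemma~\ref{star} the color-$i$ edges of $H$ form a single star $K_{1,t}$ with $t\ge 2$, say with centre $c$ and leaves $\ell_1,\dots,\ell_t$. For any $w\neq c$, at most one of the edges $c\ell_1,\dots,c\ell_t$ contains $w$, and since $t\ge 2$ some edge of $H^i$ avoids $w$, contradicting the observation; hence every relevant $w$ equals $c$, and $h(i)=\hat{d}^{f}_{B_i}(c)\le 2$.

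The remaining points are routine: checking that $(M\setminus\{u_iv_i\})\cup\{e,f\}$ is genuinely a matching (all newly covered vertices are either in $V(H)$ or equal $u_i$ or $v_i$, and the hypothesis $w\notin e$ excludes the only possible conflict) and that it is rainbow. I do not anticipate a real obstacle here; the only delicate part is the degenerate behaviour of $w$ relative to the color-$i$ star of $H$ --- namely that $w$ might be its centre or one of its leaves --- and it is precisely the dichotomy $|E(H^i)|=1$ versus $|E(H^i)|\ge 2$ that governs this and produces the two constants $4$ and $2$.
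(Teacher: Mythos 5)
Your proof is correct and follows essentially the same route the paper intends: the paper gives no written proof of this claim, but its forbidden configuration (1.1) in Figure~1 is exactly your key observation (a free edge from $\{u_i,v_i\}$ to a vertex of $H$ missed by some edge of $H^i$ yields a rainbow matching of size $k$), and your two cases reproduce the extremal configurations (2.1) and (2.2) of Figure~2. Your write-up, using Lemma~\ref{star} to identify $H^i$ as a single star and then bounding $\hat d^f_{B_i}$ at the at most two surviving vertices, is a faithful filling-in of the details the paper leaves implicit.
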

In Figure~2 we list the extremal configurations for $|E(H^i)|=1$ with $h(i)= 4$ (see Figure~(2.1)), and $|E(H^i)|\ge 2$ with $h(i)= 2$ (see Figure~(2.2)). In particular, for the extremal configurations (2.1), we have the following claim. We denote by $G_{3}$ the induced subgraph of $G$ on $V\setminus V(X_3)$.
\begin{figure}
\begin{center}
\begin{tikzpicture}
\draw [densely dashed] (-3.1,-0.7) rectangle (-1.9,0.8);
\draw [densely dashed] (-1,0.1) ellipse (14pt and 23pt);

\filldraw (-2.5,0.5) circle (1pt);\node at(-2.7,0.6) {$u_i$};
\filldraw (-2.5,-0.4) circle (1pt);\node at(-2.7,-0.3) {$v_i$};
\draw (-2.5,0.5)--(-2.5,-0.4);

\filldraw (-1,0.5) circle (1pt);\filldraw (-1,-0.4) circle (1pt);
\draw (-1,0.5)--(-1,-0.4);
\node at (-0.9,0) {$i$};

\draw [red](-2.5,0.5)--(-1,0.5);
\draw [red](-2.5,-0.4)--(-1,-0.4);
\node [red]at (-1.7,0.6) {$\alpha$};\node [red]at (-1.7,-0.5) {$\alpha$};
\draw [blue](-2.5,0.5)--(-1,-0.4);
\draw [blue](-2.5,-0.4)--(-1,0.5);
\node [blue]at (-2.2,0.1) {$\beta$};\node [blue]at (-1.3,0.1) {$\beta$};
\node at (-2.5,-1) {$M$};
\node at (-1,-1) {$H$};
\node at (-1.8,-1.2) {(2.1)};

\draw[shift ={(4 ,0)}][densely dashed] (-3.1,-0.7) rectangle (-1.9,0.8);
\draw[shift ={(4 ,0)}] [densely dashed](-1,0.1) ellipse (14pt and 23pt);

\filldraw (1.5,0.5) circle (1pt);\node at(1.3,0.6) {$u_i$};
\filldraw (1.5,-0.4) circle (1pt);\node at(1.3,-0.3) {$v_i$};
\draw (1.5,0.5)--(1.5,-0.4);

\filldraw (2.8,0) circle (1pt);\filldraw (3.3,0.5) circle (1pt);\filldraw (3.3,-0.4) circle (1pt);
\draw (2.8,0)--(3.3,0.5);\draw (2.8,0)--(3.3,-0.4);
\node at (3,0.4) {$i$};
\node at (3.2,-0.1) {$i$};
\draw [red](1.5,0.5)--(2.8,0);\draw [blue](1.5,-0.4)--(2.8,0);
\node [red]at (1.9,0.5) {$\alpha$};\node [blue]at (1.9,-0.5) {$\beta$};

\node at (1.5,-1) {$M$};
\node at (3,-1) {$H$};
\node at (2.2,-1.2) {(2.2)};
\end{tikzpicture}
\caption{Two extremal configurations of Claim~\ref{x2}.}
\end{center}
\end{figure}
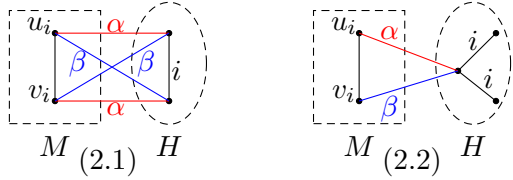

\begin{claim}\label{x2+}
For every edge $u_iv_i\in X_{2}$, if $h(i)= 4$, then $\hat d^f_{G_3}(u_i)+\hat d^f_{G_3}(v_i)= 4$.
\end{claim}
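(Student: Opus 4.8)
First I would use Claim~\ref{x2} to fix the local structure. Since $u_iv_i\in X_2$, Claim~\ref{x2} forces $|E(H^i)|=1$; write $E(H^i)=\{p_iq_i\}$ with $p_i,q_i\in V(H)$. The analysis behind Claim~\ref{x2} further shows that $h(i)=4$ can only occur in configuration~(2.1): the edges $u_ip_i,u_iq_i,v_ip_i,v_iq_i$ all lie in $G$, all carry free colors, and $\varphi(u_ip_i)=\varphi(v_iq_i)=:\alpha\neq\beta:=\varphi(u_iq_i)=\varphi(v_ip_i)$. (If one prefers to avoid quoting this, the same exchange argument used for Claim~\ref{x2} can be repeated: any other way of realising $h(i)=4$ produces either a rainbow matching of size $k$, or a size-$(k-1)$ rainbow matching whose complement in $G$ supports more colors than $H$, contradicting the choice of $M$.) From configuration~(2.1) we get $\hat d^f_{B_i}(p_i)=\hat d^f_{B_i}(q_i)=2$, so $p_i$ and $q_i$ already account for $h(i)=4$; hence no vertex of $V(H)\setminus\{p_i,q_i\}$ sends a free edge to $\{u_i,v_i\}$, and therefore every free edge from $u_i$ to $V(H)$, and every free edge from $v_i$ to $V(H)$, goes to $\{p_i,q_i\}$ and has its color in $\{\alpha,\beta\}$. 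In particular, since $p_i,q_i\in V(H)\subseteq V(G_3)$ and $u_ip_i,u_iq_i$ are free edges of distinct colors, $\hat d^f_{G_3}(u_i)\ge 2$; symmetrically $\hat d^f_{G_3}(v_i)\ge 2$, which gives the lower bound $\hat d^f_{G_3}(u_i)+\hat d^f_{G_3}(v_i)\ge 4$.

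Next I would prove $\hat d^f_{G_3}(u_i)\le 2$ (the bound $\hat d^f_{G_3}(v_i)\le 2$ then follows by the same argument with $u_i$ and $v_i$ interchanged, using the edges $u_ip_i,u_iq_i$ in place of $v_ip_i,v_iq_i$). Suppose $\hat d^f_{G_3}(u_i)\ge 3$. By the previous paragraph, $u_i$ has a free edge $u_iz$ of some color $\gamma\notin\{\alpha,\beta\}$ with $z\notin V(H)\cup\{u_i,v_i\}$; since $V(G_3)=V(H)\cup V(X_1)\cup V(X_2)$, the vertex $z$ is an endpoint of a matching edge $u_jv_j\in X_1\cup X_2$ with $j\neq i$. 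I would then build a rainbow matching of size $k$ by a simultaneous exchange of $u_iv_i$ and $u_jv_j$: set $M_1=(M\setminus\{u_iv_i,u_jv_j\})\cup\{u_iz\}$, a rainbow matching of size $k-2$ (legitimate since $\gamma$ is free); adjoin $v_ip_i$ (color $\beta$, free and $\neq\gamma$) to get a rainbow matching $M_2$ of size $k-1$ with $V(M_2)\cap V(H)=\{p_i\}$; finally note that the color $j=\varphi(u_jv_j)$ is absent from $\varphi(M_2)$ and that $u_jv_j\in X_1\cup X_2$ forces an edge of color $j$ inside $H$. If some color-$j$ edge of $H$ avoids $p_i$, adjoining it to $M_2$ yields a rainbow matching of size $k$, contradicting $r(G)=k-1$. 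Otherwise every color-$j$ edge of $H$ meets $p_i$; then $u_jv_j\notin X_1$ (for an $X_1$-edge, $H^j$ has at least two vertex-disjoint stars, of which at most one meets $p_i$), so $u_jv_j\in X_2$, and since $p_iq_i$ has color $i\neq j$ no color-$j$ edge of $H$ can meet $q_i$; replacing the step ``adjoin $v_ip_i$'' by ``adjoin $v_iq_i$'' (color $\alpha$) and then adjoining any color-$j$ edge of $H$ again gives a rainbow matching of size $k$. This contradiction proves $\hat d^f_{G_3}(u_i)\le 2$, and together with the lower bound we conclude $\hat d^f_{G_3}(u_i)+\hat d^f_{G_3}(v_i)=4$.

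The crux of the proof is the upper-bound step: because the color $i$ cannot be relocated from $u_iv_i$ onto $p_iq_i$ while keeping $v_i$ free, one is forced into the two-edge exchange above and must simultaneously re-use the three colors that were freed, namely $\gamma$, $\beta$ (or $\alpha$), and $j$; the $X_1$-versus-$X_2$ case distinction is precisely what keeps a color-$j$ edge of $H$ available once one of $p_i,q_i$ has been occupied. The only other point needing care is the structural input of the first paragraph, since both inequalities rest on the crossing pattern $u_ip_i,u_iq_i,v_ip_i,v_iq_i$ with $\alpha\neq\beta$; if this rigidity is not quoted from the proof of Claim~\ref{x2} it has to be re-derived before the rest of the argument can begin.
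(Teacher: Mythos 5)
Your proof is correct and follows essentially the same route as the paper: first pin down the rigid crossing configuration (2.1) forced by $h(i)=4$, then show that any third free color on $u_i$ or $v_i$ inside $G_3$ would enable a two-edge exchange (replace $u_iv_i,u_jv_j$ by $u_iz$, a cross edge $v_ip_i$ or $v_iq_i$, and a color-$j$ edge of $H$) producing a rainbow matching of size $k$. The only difference is presentational: where the paper cites the forbidden configurations (1.2) and (1.3) and splits on whether the $H^i$- and $H^j$-edges intersect, you inline the same exchange and split on whether some color-$j$ edge of $H$ avoids $p_i$.
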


\begin{proof} Fix $u_iv_i\in X_{1}$. Since $h(i)= 4$, we have $|E(H^i)|=1$, and the extremal configurations (2.1) appears in $G$. Without loss of generality, let $\varphi(E_H^{f}(u_i))=\varphi(E_H^{f}(v_i))=\{\alpha,\beta\}$.
For any $u_jv_j\in X_1\cup X_2$ with $u_jv_j\neq u_iv_i$, since $E(H^j)\neq \emptyset$, there are $e_1\in E(H^i)$ and $e_2\in E(H^j)$ such that $e_1$ and $e_2$ either intersect or disjoint.
If $e_1\cap e_2=\emptyset$, then $\varphi(E_{u_jv_j}^{f}(u_i))=\varphi(E_{u_jv_j}^{f}(v_i))=\emptyset$, since the configurations (1.2) in Figure~1 can not appear in $G$.
If $e_1\cap e_2\neq\emptyset$, then $\varphi(E_{u_jv_j}^{f}(u_i)), \varphi(E_{u_jv_j}^{f}(v_i))\in \{\alpha,\beta\}$, since the configurations (1.3) in Figure~1 can not appear in $G$.
Therefore, $u_i$ and $v_i$ can only connect $\alpha$ color edges or $\beta$ color edges in $G_3$, i.e., $\hat d^f_{G_3}(u_i)+\hat d^f_{G_3}(v_i)= 4$.
\end{proof}

\begin{claim}\label{y_1}
For every edge $u_iv_i\in Y_1$, we have $
\hat d(u_i)+\hat d(v_i)+h(i)\leq n+2k+2a-2$.
\end{claim}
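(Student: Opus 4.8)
Fix $u_iv_i \in Y_1$; by definition both endpoints $u_i$ and $v_i$ have at least one free color going into $V(H)$, and $s_H(i) = 0$ (so color $i$ contributes no star inside $H$). The goal is an upper bound $\hat d(u_i) + \hat d(v_i) + h(i) \leq n + 2k + 2a - 2$. The natural strategy is to bound each of the three terms separately. For $\hat d(u_i)$ and $\hat d(v_i)$ I would use the trivial bound together with the fact that $u_i$ and $v_i$ are adjacent: every color at $u_i$ or $v_i$ is the color of an edge to a vertex of $G$, and there are only $n$ vertices in total, two of which are $u_i, v_i$ themselves. A first crude estimate is $\hat d(u_i) + \hat d(v_i) \leq (n-1) + (n-1)$, but this is far too weak; I need to use the structure of $M$ and $H$ to save roughly $n$.

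**Where the savings come from.** The key observation is that colors at $u_i$ and $v_i$ that go into $V(H)$ are heavily constrained, because $M$ was chosen to maximize $|\varphi(H)| = a$, and because $u_iv_i \in X_3$ means no free color of $M$'s vertices that hits $H$ can be "swapped in." More precisely: if $u_i$ (or $v_i$) has a free color $\gamma$ on an edge to some $w \in V(H)$, then swapping $u_iv_i$ out of $M$ and $u_iw$ in would give a rainbow matching of size $k-1$ whose complement $H'$ must satisfy $|\varphi(H')| \leq a$; combined with the $Y_1$/$X_3$ structure this forces the free colors from $\{u_i,v_i\}$ into $V(H)$ to be few — essentially I expect $|\varphi(E_H^f(u_i))| + |\varphi(E_H^f(v_i))| + h(i)$ to be boundable by something like $2a + O(1)$, which is exactly what is needed. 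The unfree colors at $u_i, v_i$ number at most $k-1$ each but actually share color $i$, and the free colors going to $V(M)$ (not $V(H)$) are limited by $|V(M)| = 2k-2$. Putting $\hat d(u_i) = \hat d^{uf}(u_i) + \hat d^f(u_i)$ and splitting $\hat d^f$ into the part landing in $V(M)$ versus $V(H)$, I would assemble: unfree part $\leq$ (number of matching edges met) $\leq k-1$ at each, free-to-$V(M)$ part $\leq 2k-2$ total (shared across both endpoints, counting edges into the $2k-2$ vertices of $V(M)\setminus\{u_i,v_i\}$ plus possibly the edge $u_iv_i$ itself), and free-to-$V(H)$ part plus $h(i)$ bounded via the extremal analysis. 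The arithmetic should then collapse to $n + 2k + 2a - 2$ after using $n = |V(M)| + |V(H)| = 2k - 2 + |V(H)|$.

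**Order of steps.** First I would write $h(i) = \sum_{w \in V(H)} \hat d^f_{B_i}(w)$ and bound it by $2|V(H)|$ trivially, but then sharpen: since configurations (1.1)–(1.3) are forbidden and $u_iv_i \in X_3$, any free color on $u_iw$ or $v_iw$ with $w \in V(H)$ must be "absorbable," and I would show $h(i) \leq |V(H)| + (\text{small correction})$ by pairing each free edge into $H$ from $u_i$ or $v_i$ with a constraint on $\varphi(H)$. Second, bound $\hat d(v_i) \leq n - 1$ trivially (or slightly better using that $v_i \nsim$ at most something), and third, bound $\hat d(u_i)$ by carefully partitioning its colors into (a) unfree, $\leq k-1$; (b) free into $V(M)$, $\leq k-1$; (c) free into $V(H)$, which together with $h(i)$ is controlled by the $Y_1$-structure and the maximality of $a$. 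Summing and substituting $|V(H)| = n - 2k + 2$ gives the bound. I would present it as: $\hat d(u_i) + \hat d(v_i) + h(i) \leq (n-1) + [(k-1) + (k-1) + |\varphi(E_H^f(u_i))|] + [|V(H)| + \text{corr}]$ and then check the corrections contribute at most $2a - 1 - (n-1) + \dots$; the clean target $n + 2k + 2a - 2$ suggests the intended split is $\hat d(v_i) \leq n-1$, $\hat d(u_i) \leq$ (something like $2k - 2 + a$), $h(i) \leq a + 1$, or a similar partition — I would reverse-engineer the exact split from the target.

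**Main obstacle.** The hard part will be bounding the combined quantity $\hat d^f_{\to V(H)}(u_i) + \hat d^f_{\to V(H)}(v_i) + h(i)$, i.e., simultaneously controlling the free colors from the matching edge into $H$ and the free color degrees of $H$-vertices back toward that edge. This requires using the maximality of $|\varphi(H)| = a$ in a swap argument (exchange $u_iv_i$ for a free edge to $H$ and argue the new complement cannot have more colors), and then carefully accounting for how edges in $H^i$ (the color-$i$ stars inside $H$) — of which there are none since $u_iv_i \in X_3$ — versus free edges interact. Since $s_H(i) = 0$, color $i$ behaves like a free color on the $V(H)$ side, which should actually simplify matters, but correctly extracting the $2a$ term (rather than $a$ or $3a$) from the interplay of the two endpoints is the delicate point, and I expect it to need the forbidden configurations (1.1)–(1.3) applied to pairs of free edges at $u_i$ and at $v_i$ landing on the same or different stars of $H$.
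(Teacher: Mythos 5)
There is a genuine gap: your quantitative expectations for where the terms $n$ and $2a$ come from are wrong, and you are missing the one structural lemma that drives the whole argument. The paper's proof rests on the observation that for $u_iv_i\in Y_1$ the bipartite graph $B_i$ between $\{u_i,v_i\}$ and $V(H)$ cannot contain two \emph{disjoint} edges with two \emph{distinct free} colors --- otherwise you delete $u_iv_i$ from $M$ and insert both edges, producing a rainbow matching of size $k$. (Note this is a size-increasing swap, one edge out and two in; your proposed swap of $u_iv_i$ for a single edge $u_iw$ keeps the matching at size $k-1$ and appeals to the maximality of $|\varphi(H)|$, which does not yield the needed contradiction here --- that maximality is what powers Claim~\ref{maxdegree}, not this claim.) This forbidden configuration forces $|\varphi(E_H^{f}(u_i))|\le 2$ and $|\varphi(E_H^{f}(v_i))|\le 2$, and the proof then splits into four cases according to these two values, bounding $h(i)$ differently in each.

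Your expectation that $|\varphi(E_H^{f}(u_i))|+|\varphi(E_H^{f}(v_i))|+h(i)$ is of order $2a+O(1)$, or that $h(i)\le a+1$, is false: in the worst case (both endpoints see the same single free color $\alpha$ in $H$), every vertex of $H$ may receive an $\alpha$-edge from $B_i$, so $h(i)$ can be as large as $|V(H)|=n-2k+2$. That is precisely where the additive $n$ in the target bound comes from. The $2a$ term does not come from any swap argument either; it comes from applying Claim~\ref{maxdegree} to both endpoints, $\hat d(u_i)+\hat d(v_i)\le 2\hat\Delta(G)\le 2(2k-2+a)=4k-4+2a$. Adding $h(i)\le n-2k+2$ gives exactly $n+2k+2a-2$. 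Your proposed split $\hat d(v_i)\le n-1$ is both too weak and not the one used; without the disjoint-free-edges lemma and the correct attribution of $n$ to $h(i)$ and of $2a$ to $\hat\Delta(G)$, the plan does not close.
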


\begin{proof}
For every edge $u_iv_i\in Y_1$, we have $|\varphi(E_H^{f}(u_i))|\geq 1$ and $|\varphi(E_H^{f}(v_i))|\geq 1$. First, we claim that $|\varphi(E_H^{f}(u_i))|\leq 2$ and $|\varphi(E_H^{f}(v_i))|\leq 2$. Otherwise, without loss of generality, suppose that there are at least three vertices $w_1,w_2,w_3\in N(u_i)\cap V(H)$ such that $\varphi(u_iw_1)=\alpha_1$, $\varphi(u_iw_2)=\alpha_2$ and $\varphi(u_iw_3)=\alpha_3$, where $\{\alpha_1,\alpha_2,\alpha_3\}\subseteq \varphi_f$. Since $|\varphi(E_H^{f}(v_i))|\geq 1$, there is at least one vertex $w_k\in N(v_i)\cap V(H)$ such that $\varphi(v_iw_k)=\alpha_k$ and $\alpha_k\in \varphi_f$. In this case, we can always find two disjoint edges with different free colors to replace $u_iv_i$ and obtain a larger matching in $G$, a contradiction.
Namely, for any edge $u_iv_i\in Y_1$, $B_i$ can not contain two disjoint edges with different free colors. Next, we will discuss all the possible cases.

\textbf{Case 1: $|\varphi(E_H^{f}(u_i))|=|\varphi(E_H^{f}(v_i))|=2$.}
In this case, the set of edges with free colors in $B_i$ can only form a $\widetilde{K_4}$, whose edges of the same color form a matching of size $2$.
Otherwise, $B_i$ contains two disjoint edges with different free colors, a contradiction. Thus $h(i)=4$. By Claim \ref{maxdegree}, we have $\hat d(u_i)+\hat d(v_i)+h(i)\leq 4k+2a$.

\textbf{Case 2: $|\varphi(E_H^{f}(u_i))|=2$ \text{and} $|\varphi(E_H^{f}(v_i))=1$.} Under the circumstances, there are two vertices $w_1,w_2\in N(u_i)\cap V(H)$ such that $\varphi(u_iw_1)=\alpha_1$, $\varphi(u_iw_2)=\alpha_2$, and $\{\alpha_1,\alpha_2\}\subseteq \varphi_f$. Similarly, there is also one vertex $w_k\in N(v_i)\cap V(H)$ such that $\varphi(v_iw_k)=\alpha_k$ and $\alpha_k\in \varphi_f$. In order to avoid two disjoint edges with different free colors in $B_i$, either $w_k=w_1$ and $\alpha_k=\alpha_2$ or $w_k=w_2$ and $\alpha_k=\alpha_1$, which implies that $|E_H^{f}(v_i)|=1$.
For $w\in V(H)\backslash\{w_1,w_2\}$, if there exists $u_iw\in E(G)$, then either $\varphi(u_iw)=\alpha_k$ or $\varphi(u_iw)\in \varphi_{uf}$. Hence, $h(i)=|E_H^{f}(u_i)|+|E_H^{f}(v_i)|\le |V(H)|-|E_H^{uf}(u_i)|+1$.
In addition, recall that $\hat d(u_i)= \hat d_M(u_i)+\hat d^{f}_H(u_i)+\hat d^{uf}_H(u_i)$, $\hat d^{f}_H(u_i)=|\varphi(E_H^{f}(u_i))|=2$ and $\hat d^{uf}_H(u_i)\le |E_H^{uf}(u_i)|$. Therefore, we have
\[
\hat d(u_i)+\hat d(v_i)+h(i)\leq 2(k-1)-1+2+\hat d(v_i)+n-2(k-1)+1\le n+2k+a,
\]
where the last inequality follows from Claim \ref{maxdegree}, $\Delta^{c}(G) \leq 2(k-1)+a$.


\textbf{Case 3: $\varphi(E_H^{f}(u_i))=\{\alpha_1\}$, $\varphi(E_H^{f}(v_i))=\{\alpha_2\}$ \text{and} $\alpha_1\neq \alpha_2$.} It is easy to check $|E_H^{f}(u_i)|=|E_H^{f}(v_i)|=1$, otherwise, $B_i$ contains two disjoint edges with different free colors, a contradiction. Therefore, $\hat d(u_i)+\hat d(v_i)+h(i)\leq 2\hat \Delta(G)+2\leq 4k+2a-2$.

\textbf{Case 4: $\varphi(E_H^{f}(u_i))=\varphi(E_H^{f}(v_i))=\{\alpha\}$.} In this case, we have $\hat d(u_i)+\hat d(v_i)+h(i)\leq 2\hat \Delta(G)+|V(H)|\leq n+2k+2a-2$.

In conclusion, since $a\geq2$ and $n\geq 4k-4$, we have $\hat d(u_i)+\hat d(v_i)+h(i)\leq n+2k+2a-2$.
\end{proof}

Finally, by the definition of $Y_2$, it is easy to get the following claim.
\begin{claim}\label{y_2}
For every edge $u_iv_i\in Y_2$, we have $h(i)\leq n-2k+2$.
\end{claim}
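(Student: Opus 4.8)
The plan is to unwind the definition of $h(i)$ and exploit the fact that, for an edge $u_iv_i\in Y_2$, exactly one of its two endpoints sends no free-colored edge into $H$; this forces $h(i)$ to equal the number of free edges joining the \emph{other} endpoint to $V(H)$, which is at most $|V(H)|$. First I would record that $|V(H)| = n - |V(M)| = n - 2(k-1) = n - 2k + 2$, since $M$ is a matching with $k-1$ edges.

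Fix $u_iv_i\in Y_2$ and, without loss of generality, let $u_i$ be the unique endpoint with $|\varphi(E_H^{f}(u_i))|\ge 1$, so that $|\varphi(E_H^{f}(v_i))| = 0$; the latter simply says $E_H^{f}(v_i) = \emptyset$, i.e.\ every edge of $G$ joining $v_i$ to $V(H)$ is colored with an unfree color. Now I would compute $h(i) = \sum_{w\in V(H)} \hat d^{f}_{B_i}(w)$ directly: for a vertex $w\in V(H)$, the only edges of $B_i$ incident to $w$ are $wu_i$ and $wv_i$, and since $wv_i$ (if present) carries no free color, the number of distinct free colors on edges of $B_i$ at $w$ is $1$ if $wu_i\in E_H^{f}(u_i)$ and $0$ otherwise. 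Hence $\hat d^{f}_{B_i}(w)\le 1$ for every $w\in V(H)$, and summing over the $n-2k+2$ vertices of $H$ yields $h(i) = |E_H^{f}(u_i)| \le |V(H)| = n-2k+2$, as claimed.

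I do not expect a genuine obstacle here: the statement follows straight from the definitions, and the author's remark that it is ``easy'' is accurate. The only point requiring a moment's care is that $\hat d^{f}_{B_i}(w)$ counts \emph{distinct} free colors among the (at most two) edges of $B_i$ incident to $w$, so before collapsing the sum one must explicitly invoke the defining property of $Y_2$, namely that the $v_i$-side contributes no free color at all.
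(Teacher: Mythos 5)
Your proof is correct and is exactly the argument the paper has in mind (the paper omits it as immediate from the definition of $Y_2$): since only the $u_i$-side of $B_i$ can carry free colors, each of the $|V(H)|=n-2k+2$ vertices of $H$ contributes at most $1$ to $h(i)$. Nothing further is needed.
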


\subsection{Estimating the total color degree of $G$}
In this section, using above claims, we will estimate the total color degree of $G$. First, recalling the definition of $h(i)$, we have
\begin{equation}\label{total}
  \begin{split}
  \sum_{v\in V}{\hat d(v)}
  = & \sum_{v\in V(M)}{\hat d(v)}+\sum_{w\in V(H)}{\hat d(w)}\\
  \leq & \sum_{u_iv_i\in E(M)}{(\hat d(u_i)+\hat d(v_i)+h(i))}+\sum_{w\in V(H)}{\hat d^{uf}(w)}\\
  \end{split}
\end{equation}
Note that $\hat d^{uf}(v)\le k-1$ for all $v\in V$, since $|\varphi_{uf}|=k-1$.
Next, we break the proof step into two cases.
\smallskip

\noindent \textbf{Case 1: $a=k-1$.}

By the definition of $a$, one should notice that $a = k-1$, which means that $M = X_1\cup X_2$.
By Claim~\ref{maxdegree}, $\hat \Delta(G)\le 3k-3$ under this case.
Recalling Claim~\ref{x1}, we have $\hat d(u_i)+\hat d(v_i)+h(i)\le 2\hat \Delta(G)\le 6(k-1)$ for any $u_iv_i\in X_1$.
By Claim \ref{x2} and \ref{x2+}, we have
\[
\hat d(u_i)+\hat d(v_i)+h(i)\le \max\{\hat d^{uf}(u_i)+\hat d^{uf}(v_i)+4+4,2\hat \Delta(G)+3\}\le 6(k-1)+3
\]
for any $u_iv_i\in X_2$, where the last inequality follows from $k\ge 3$.

According to Inequations (\ref{total}), $\hat d(G)=2k-1$ and $n\ge 4k-4$, we have
\begin{equation*}
   \begin{split}
(2k-1)n=\sum_{v\in V}{\hat d(v)}
    \leq &(k-1)(6(k-1)+3)+(n-2k+2)(k-1)\\
      < &(2k-1)n, \\
 \end{split}
 \end{equation*}
which is contradictory.
\smallskip

\noindent \textbf{Case 1: $a<k-1$.}

According to Inequations (\ref{total}), Claim \ref{maxdegree}--\ref{y_2} and $n\ge 4k-4$, we have
\begin{equation*}
   \begin{split}
\sum_{v\in V}{\hat d(v)}
    \leq & 2\hat \Delta(G)\cdot(k-1-y_1)+4x_2+y_1(n+2k+2a-2)+y_2(n-2k+2)+(n-2k+2)(k-1)\\
    \leq & 2(2k-2+a)(k-1)+4x_2+(y_1+y_2)(n-2k+2)+(n-2k+2)(k-1)\\
      < &(2k-1)n, \\
 \end{split}
 \end{equation*}
 which is also contradictory.

\section{Proof of Theorem \ref{result2}}
In this section, we will prove Theorem~\ref{result2} by induction on $k$. The base case $k = 1$ is trivial.
Suppose $k\geq 2$, and let $G$ with strongly edge coloring $\varphi$ be a counterexample to Theorem~\ref{result2} with the fewest edges. Let $2k-1:=d(G)$ and $n:=|V(G)|$. By the result of Kostochka and Yancey~\cite{MR2900062} and Theorem~\ref{strongly}, we may assume that $n\ge 2k+1$ and $\delta(G)\le k-1$.

For the sake of contradiction, we still consider the total degree of $G$ in the following proofs. Since $\delta(G)\le k-1$, there is a vertex $v\in V(G)$ such that $d(v)\le k-1$. By the minimality of $G$, we have $d(v)\ge 1$. Let $u\in N(v)$, $\varphi(uv)=\alpha$, and $G^\alpha$ denote the subgraph of $G$ with the edges in color class $\alpha$. Since $G$ is strongly edge-colored, $G^\alpha$ is an induced matching in $G$. Hence, $d(u)$ is at most $n-2|E(G^\alpha)|+1$.
Let $G^{*}$ be obtained from $G$ by deleting the vertex $v,u$ and all edges in $G^\alpha$, then $r(G^{*})< k-1$. By induction hypothesis, $d(G^{*})< 2k-3$. Therefore, we have
\begin{equation*}
   \begin{split}
(2k-1)n=\sum_{v\in V}{ d(v)}
    =&2d(u)+2d(v)+2(|E(G^\alpha)|-1)+(n-2)d(G^{*})\\
      < & 2(k-1)+2(n-2|E(G^\alpha)|+1)+2(|E(G^\alpha)|-1)+(n-2)(2k-3)\\
      < &(2k-1)n, \\
 \end{split}
 \end{equation*}
which is contradictory.

\section{Concluding Remarks}\label{Sec-re.}
Though we were not able to resolve Question~\ref{Q2} for all graphs, we believe the answer is
affirmative:
\begin{conjecture}\label{conj.}
All but $\widetilde{K_4}$ edge-colored graphs $G$ with $\hat d(G)\geq 2k-1$ contain a rainbow matching
of size at least $k$.
\end{conjecture}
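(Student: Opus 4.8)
The conjecture is precisely Theorem~\ref{result1} with the cardinality hypothesis $|V(G)|\ge 4k-4$ deleted, so the plan is to keep the minimal-counterexample and induction-on-$k$ scaffolding of Section~2 and push it below $4k-4$ vertices. Tracing that hypothesis through the argument, the preliminary results are robust: Claim~\ref{maxdegree} uses no cardinality bound, and the remaining claims hold verbatim for $n\ge 2k+2$, with the single order $n=2k+1$ absorbable into the near-complete analysis below. The bound $4k-4$ is \emph{genuinely} consumed only in the total-color-degree count of Section~2.2, where it is sharp in Case~1 ($a=k-1$): that displayed inequality reduces to $(k-1)(n+4k-1)<(2k-1)n$, equivalently $n>4k-5+\tfrac1k$, i.e.\ exactly to $n\ge 4k-4$. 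Since a minimal counterexample already has $\hat\Delta(G)>2k-1$ (Claim~\ref{maxdegree} together with the Kostochka--Yancey lower bound forcing $\hat\delta(G)<2k-1$) and always $\hat\Delta(G)\le n-1$, one gets $n\ge 2k+1$ for free. Hence the whole conjecture reduces to the finite band: prove $r(G)\ge k$ for every edge-colored $G\ne\widetilde{K_4}$ with $\hat d(G)=2k-1$ and $2k+1\le n\le 4k-5$.

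The two ends of this band reveal the mechanism. A minimal counterexample cannot even reach $n=2k$: there $\hat d(G)=2k-1=n-1$ forces $\hat d(v)=n-1$ for every $v$, whence $d(v)=\hat d(v)=n-1$, so $\hat\delta(G)=2k-1$ and Kostochka--Yancey~\cite{MR2900062} already gives $r(G)\ge k$ unless $G=\widetilde{K_4}$. More generally, by the Remark after Theorem~\ref{result1} any surviving $G$ satisfies $|E(G)|\ge n^2/4+3n/4$, so throughout the band $G$ lies within $O(n)$ edges of complete, and by Lemma~\ref{star} every color class is a star forest. My working hypothesis is therefore that each such $G$ is a near-properly (indeed near-strongly) edge-colored almost-complete graph; the strategy is to make this structural reduction exact and then finish either by Kostochka--Yancey or by Theorem~\ref{result2}, using the forbidden configurations (1.1)--(1.3) of Figure~1 to excise the few color coincidences that keep $G$ from being properly colored.

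Alternatively one may try to win the count itself on the band by sharpening the two crude estimates of Case~1. The term $\sum_{w\in V(H)}\hat d^{uf}(w)\le(n-2k+2)(k-1)$ charges every vertex of $H$ with all $k-1$ matching colors; but when $a=k-1$ the colors on $H$ are \emph{exactly} the matching colors, each forming a star forest, so a vertex attaining $\hat d^{uf}(w)=k-1$ simultaneously lies in $k-1$ stars, and on only $n-2k+2$ vertices this rigidity forces repeated incidences that, via Claim~\ref{x1} and configurations (1.2)--(1.3), cannot all coexist. Quantifying this collision should buy a saving of order $n-2k$ in the sum --- precisely what is needed to reverse the Case~1 inequality once $n<4k-4$ --- and symmetrically the per-edge bound $\hat d(u_i)+\hat d(v_i)+h(i)\le 6(k-1)+3$ cannot be met by all matching edges at once under $\hat\Delta\le 3(k-1)$ and near-completeness.

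The genuine obstacle is this intermediate band $2k+1\le n<4k-4$ itself. Because the Case~1 computation is exactly tight at $n=4k-4$, no purely local or averaging refinement can close the gap on its own; what is needed is a global structural dichotomy that either exhibits an augmenting configuration directly or certifies $G$ as a colored near-complete graph reducible to the properly/strongly colored complete case. Classifying all dense edge-colorings on this bounded range of orders, and in particular ruling out exotic ones that evade both Kostochka--Yancey and the Figure~1 analysis, is where the real difficulty lies and is the reason Conjecture~\ref{conj.} remains open; it is there that I would concentrate the work.
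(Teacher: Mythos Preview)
The statement you are attempting is Conjecture~\ref{conj.}, which the paper explicitly does \emph{not} prove: the sentence immediately preceding it reads ``Though we were not able to resolve Question~\ref{Q2} for all graphs, we believe the answer is affirmative.'' There is therefore no ``paper's own proof'' to compare your proposal against; what you have written is a research plan, and you yourself close by acknowledging that the intermediate band $2k+1\le n\le 4k-5$ ``is the reason Conjecture~\ref{conj.} remains open.'' So the proposal is not a proof and does not claim to be one.

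On the technical side, your localisation of the obstruction is accurate. Claim~\ref{maxdegree} indeed carries over once the induction hypothesis is the full conjecture (modulo checking $G^\ast\neq\widetilde{K_4}$ when $k=3$), and your observation that the $n\ge 4k-4$ hypothesis enters Claim~\ref{y_1} only through the comparison $4k+2a\le n+2k+2a-2$, hence really only needs $n\ge 2k+2$, is correct. Likewise your reduction of the Case~1 display to $n>4k-5+\tfrac1k$ is exactly right, and this is the sole place the bound is tight. What remains genuinely missing is any concrete mechanism for the band: both the ``near-complete structural reduction'' and the ``collision-counting refinement'' you sketch are heuristics rather than arguments, and neither is developed to the point of yielding an inequality or an augmenting configuration. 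In particular, the claim that a vertex of $H$ meeting all $k-1$ unfree colors forces incompatibilities via configurations (1.2)--(1.3) would need to be made quantitative, and the appeal to Theorem~\ref{result2} would require showing that a minimal counterexample in the band is actually strongly edge-colored, which is far from clear. These are the gaps the paper itself leaves open.
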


We remark that using the ideas introduced in the proof of Theorem~\ref{result1}, for properly edge-colored graph $G$, it is conceivable that the lower bound for $|V(G)|$ in Theorem~\ref{result1} may be further improved. However, it would be very interesting (and seems to be difficult) to prove conjecture~\ref{conj.} for all properly edge-colored graphs. If conjecture~\ref{conj.} for all properly edge-colored graphs is ture, then it would yield a good upper bound on the rainbow Tur\'an number of matchings. Given a graph $H$, the \emph{rainbow Tur\'an number} of $H$ is defined as the maximum number of edges in a properly edge-colored graph on $n$ vertices with no rainbow copy of $H$. The systematic study of rainbow Tur\'an number was initiated in 2007 by Keevash-Mubayi-Sudakov-Verstra\"{e}te~\cite{2007Rainbow}. They asymptotically determined the rainbow Tur\'an number for any non-bipartite graph, but for the rainbow Tur\'an number of matchings, there are still no good results so far.

\section*{Acknowledgements}
The author would like to thank Professor Guanghui Wang's feedback and guidance while working on these problems. Furthermore, the author would like to thank Yangyang Cheng, Yulin Chang, Tong Li and Hao Lin's help. 

\bibliographystyle{abbrv}
\bibliography{ref}

%

\end{document}